\newtheorem{theorem}{Theorem}[section]
\newtheorem{corollary}[theorem]{Corollary}
\newtheorem{lemma}[theorem]{Lemma}
\newtheorem{prop}[theorem]{Proposition}
\newtheorem{conj}[theorem]{Conjecture}
\newtheorem{remark}[theorem]{Remark}
\newcommand{\innerthmname}{}
\newenvironment{statement}[1]
 {\renewcommand{\innerthmname}{#1}\innerthm}
 {\endinnerthm}
\theoremstyle{definition}
\newtheorem{definition}[theorem]{Definition}
\title{Properties of the Beilinson\\Height Pairing}
\author{Thomas Wisson}
\date{}
\begin{document}
\maketitle

\begin{abstract}
First constructed by Beilinson for curves defined over an algebraically closed field; R\"ossler and Szamuely generalized Beilinson's height pairing to the higher dimensional setting. In this paper we study this pairing and relate it to the intersection product by constructing a new height pairing. We also prove that it satisfies a projection formula matching the one satisfied by the intersection product.
\end{abstract}

\section{Introduction}
Let $X$ be a smooth projective variety of dimension $d$ over a function field $K$, i.e. $K=k(B)$ where $B$ is a smooth integral scheme of dimension $b$, of finite type over an algebraically closed field $k$.\\

We let $CH^{i}(X)$ denote the Chow groups of codimension $i$ cycles on $X$, and $CH^{i}_{\text{hom}}(X)_{\mathbb{Q}}$ be the kernel of the cycle map
\begin{align*}
CH^{i}(X)_{\mathbb{Q}}\rightarrow H^{2i}_{\text{\'et}}(X_{\overline{K}},\mathbb{Q}_{\ell}(i))
\end{align*}
where $\ell$ is a prime invertible in $K$, and $\overline{K}$ is the algebraic closure of $K$.\\

A height pairing on $X$ is a bilinear pairing
\begin{align*}
\langle\cdot,\cdot\rangle:CH^{p}_{\text{hom}}(X)_{\mathbb{Q}}\times CH^{q}_{\text{hom}}(X)_{\mathbb{Q}}\rightarrow \mathcal{Q}
\end{align*}
for each $p,q$ such that $p+q=d+1$, where $\mathcal{Q}$ is some coefficient group.
\subsection{The Geometric Height Pairing}
\label{sec:GeoHght}
There is a natural way to construct a height pairing on $X$, but this comes with a caveat. Take a regular projective model $\pi:\mathcal{X}\rightarrow B$ of $X$. We remark that such a model can always be found provided resolution of singularities holds, which is at least always true in characteristic zero.\\

Now, extend the cycles $a_{1}\in CH^{p}_{\text{hom}}(X)_{\mathbb{Q}}$, $a_{2}\in CH^{q}_{\text{hom}}(X)_{\mathbb{Q}}$ to cycles $\widetilde{a}_{1}\in CH^{p}(\mathcal{X})_{\mathbb{Q}}$, $\widetilde{a}_{2}\in CH^{q}(\mathcal{X})_{\mathbb{Q}}$ on $\mathcal{X}$. On the Chow groups of $\mathcal{X}$ the intersection product defines a bilinear pairing
\begin{align*}
\cdot\cap\cdot:CH^{p}(\mathcal{X})_{\mathbb{Q}}\times CH^{q}(\mathcal{X})_{\mathbb{Q}}\rightarrow CH^{d+1}(\mathcal{X})_{\mathbb{Q}}
\end{align*}
So we can define $\langle a_{1}, a_{2}\rangle_{\mathcal{X}}$ to be the intersection product $\widetilde{a}_{1}\cap\widetilde{a}_{2}$, or rather the image $\pi_{*}(\widetilde{a}_{1}\cap\widetilde{a}_{2})$ under the pushforward map $CH^{d+1}(\mathcal{X})_{\mathbb{Q}}\rightarrow CH^{1}(B)_{\mathbb{Q}}$.
\\

If $\mathcal{X}\rightarrow B$ is smooth this gives a well-defined height pairing by the following lemma, which is Proposition 6.1.~in \cite{Ros-Sam}. 
\begin{lemma}
\label{lem:SmthInt}
Suppose that $\pi:\mathcal{X}\rightarrow B$ is a smooth model. Then for any extension $\widetilde{a}_{1}\in CH^{p}(\mathcal{X})$ of $a_{1}\in CH^{p}_{\textnormal{hom}}(X)$, and any extension $\widetilde{a}_{2}\in CH^{q}(\mathcal{X})$ of $0\in CH^{q}(X)$, we have
\begin{align*}
\pi_{*}(\widetilde{a}_{1}\cap\widetilde{a}_{2})=0
\end{align*}
\end{lemma}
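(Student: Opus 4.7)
The plan is to reduce to a cohomological vanishing on a geometric fiber, which will follow from the homological triviality of $a_1$ via smooth and proper base change. First I would invoke the localization sequence for Chow groups: since $\widetilde{a}_2$ restricts to $0\in CH^q(X)_{\mathbb{Q}}$, it lies in the image of $j_{*}:CH^{q-1}(\mathcal{X}_D)_{\mathbb{Q}}\to CH^q(\mathcal{X})_{\mathbb{Q}}$ for some divisor $D\subset B$, where $j:\mathcal{X}_D:=\pi^{-1}(D)\hookrightarrow\mathcal{X}$. By linearity I may assume $D$ is irreducible and write $\widetilde{a}_2 = j_{*}\beta$ for some $\beta\in CH^{q-1}(\mathcal{X}_D)_{\mathbb{Q}}$.

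Since $\pi$ is smooth, $\mathcal{X}_D$ is a smooth Cartier divisor in $\mathcal{X}$, so $j$ is a regular embedding, the Gysin pullback $j^{*}$ is defined, and the projection formula gives $\widetilde{a}_1\cap\widetilde{a}_2 = j_{*}(j^{*}\widetilde{a}_1\cap\beta)$. Writing $i:D\hookrightarrow B$ and $\pi_D:\mathcal{X}_D\to D$, functoriality of pushforward together with $\pi\circ j = i\circ\pi_D$ give
\[
\pi_{*}(\widetilde{a}_1\cap\widetilde{a}_2) \;=\; i_{*}\pi_{D,*}(j^{*}\widetilde{a}_1\cap\beta) \;\in\; CH^1(B)_{\mathbb{Q}}.
\]
The inner pushforward lies in $CH^0(D)_{\mathbb{Q}} = \mathbb{Q}\cdot[D]$, and the coefficient $n$ with $\pi_{D,*}(j^{*}\widetilde{a}_1\cap\beta) = n[D]$ is the degree of the zero-cycle $(j^{*}\widetilde{a}_1\cap\beta)|_{\mathcal{X}_{\overline{\eta_D}}}$ on the geometric generic fiber of $\pi_D$, where $\overline{\eta_D}$ is a geometric point of $B$ above the generic point of $D$. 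It therefore suffices to show $n = 0$.

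Here is where the homological triviality of $a_1$ enters. By compatibility of cycle classes with intersection products, $n$ is the degree of the cohomology class $[\widetilde{a}_1|_{\mathcal{X}_{\overline{\eta_D}}}]\cup[\beta|_{\mathcal{X}_{\overline{\eta_D}}}]$, so it is enough that $[\widetilde{a}_1]$ vanish on restriction to $\mathcal{X}_{\overline{\eta_D}}$. Since $\pi$ is smooth and proper, smooth proper base change makes $R^{2p}\pi_{*}\mathbb{Q}_\ell(p)$ a lisse $\ell$-adic sheaf on the smooth connected base $B$, and the edge map of the Leray spectral sequence sends $[\widetilde{a}_1]$ to a global section $s$ whose value at any geometric point $\overline{b}\to B$ is the fiberwise cycle class $[\widetilde{a}_1|_{\mathcal{X}_{\overline{b}}}]$. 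By hypothesis $s$ vanishes at the geometric generic point of $B$, and a global section of a lisse sheaf on a connected scheme that vanishes at one geometric point vanishes identically; so $s$ vanishes at $\overline{\eta_D}$ as well, giving $n=0$.

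The main obstacle is the last cohomological step: one must invoke smooth proper base change to secure the lisseness of $R^{2p}\pi_{*}\mathbb{Q}_\ell(p)$, use the Leray edge map to identify the global class $[\widetilde{a}_1]$ with its fiberwise specializations at possibly non-closed points of $B$, and cite the compatibility of the cycle class map with intersection products and with the degree on a smooth projective variety over a field. The remaining ingredients—the localization sequence, the Gysin and projection formulas for the regular embedding $j$, and the functoriality of pushforward—are formal once $\mathcal{X}\to B$ is smooth.
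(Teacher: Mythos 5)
Your argument is essentially correct, and it is worth noting that the paper does not give a proof of this lemma at all: it simply quotes Proposition 6.1 of \cite{Ros-Sam}. So what you have written is an independent, self-contained proof, and it runs along the expected lines of the cited one: reduce to cycles supported over a proper closed subset of $B$, identify the pushforward with a fibrewise degree, and use smooth proper base change (lisseness of $\mathbf{R}^{2p}\pi_{*}\mathbb{Q}_{\ell}(p)$ plus connectedness of $B$) to propagate the homological triviality of $a_{1}$ from the geometric generic fibre to every geometric fibre. Two points in your reduction deserve a small repair. First, the localization sequence only gives that $\widetilde{a}_{2}$ is supported over some proper closed subset $Z\subset B$, not over a divisor; either enlarge $Z$ to a divisor (harmless when $B$ is quasi-projective) or, more cleanly, observe that components $Z_{i}$ of codimension at least $2$ contribute nothing, since the corresponding pushforward lands in $CH_{b-1}(Z_{i})=0$ for dimension reasons, so only the divisorial components survive and your computation applies to each of them. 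Second, $\mathcal{X}_{D}$ is in general not smooth ($D$ itself need not be smooth even though $\pi$ is), but this is immaterial: $\mathcal{X}_{D}$ is an effective Cartier divisor in the smooth variety $\mathcal{X}$, so $j$ is a regular embedding, and the expression $j^{*}\widetilde{a}_{1}\cap\beta$ should be read via Fulton's refined/operational intersection, which supplies the projection formula and its compatibility with restriction to the generic fibre of $\pi_{D}$. With that reading, your identification of the coefficient $n$ with $\deg\bigl(\mathrm{cl}(\widetilde{a}_{1}|_{\mathcal{X}_{\overline{\eta_{D}}}})\cup\mathrm{cl}(\beta|_{\mathcal{X}_{\overline{\eta_{D}}}})\bigr)$, and the monodromy argument showing that a global section of a lisse sheaf on a connected base vanishing at one geometric point vanishes identically, correctly finish the proof.
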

If the model is not smooth however, the pairing may depend on how we extend the $a_{i}$ to $\widetilde{a}_{i}$. This leads us on to the following conjecture.
\begin{conj}
\label{conj:BeilCyc}
For each $a_{1}\in CH^{p}_{\textnormal{hom}}(X)_{\mathbb{Q}}$ there exists a so-called Beilinson extension $\widetilde{a}_{1}\in CH^{p}(\mathcal{X})_{\mathbb{Q}}$ such that for any extension $\widetilde{a}_{2}\in CH^{q}(\mathcal{X})_{\mathbb{Q}}$ of $0\in CH^{q}(X)$ we have
\begin{align*}
\pi_{*}(\widetilde{a_{1}}\cap\widetilde{a}_{2})=0
\end{align*}
\end{conj}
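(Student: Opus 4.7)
The plan is to reduce, via the localization exact sequence for Chow groups, to a question about the restriction of $\widetilde{a}_1$ to the bad fibers of $\pi$, and then to construct the Beilinson extension by exploiting the freedom to modify $\widetilde{a}_1$ by vertical cycles.

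For the first step, pick a proper closed subset $Z \subsetneq B$ with open complement $U = B \setminus Z$ containing the generic point, and write $j:\mathcal{X}_Z \hookrightarrow \mathcal{X}$ for the inclusion of the corresponding closed subscheme. The localization sequence
\begin{align*}
CH^q(\mathcal{X}_Z)_{\mathbb{Q}} \xrightarrow{j_*} CH^q(\mathcal{X})_{\mathbb{Q}} \to CH^q(\mathcal{X}_U)_{\mathbb{Q}} \to 0,
\end{align*}
together with a colimit argument over shrinking $U$, shows that any extension $\widetilde{a}_2$ of $0 \in CH^q(X)_{\mathbb{Q}}$ can be represented by $j_*\beta$ for some $\beta \in CH^q(\mathcal{X}_Z)_{\mathbb{Q}}$. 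Using the projection formula with refined Gysin pullback, $\pi_*(\widetilde{a}_1 \cap j_*\beta) = (\pi \circ j)_*(j^!\widetilde{a}_1 \cap \beta)$, so the pairing depends only on the restriction $j^!\widetilde{a}_1$ to the fibers over $Z$.

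Next I would exploit the fact that two extensions of $a_1$ differ by a class of the form $j_*\gamma$ with $\gamma \in CH^p(\mathcal{X}_Z)_{\mathbb{Q}}$. The freedom available to modify $\widetilde{a}_1$ is therefore exactly $\mathrm{im}\bigl(CH^p(\mathcal{X}_Z)_{\mathbb{Q}} \to CH^p(\mathcal{X})_{\mathbb{Q}}\bigr)$, and the goal is to use it to kill the linear functional $\beta \mapsto (\pi\circ j)_*(j^!\widetilde{a}_1 \cap \beta)$ on $CH^q(\mathcal{X}_Z)_{\mathbb{Q}}$ for every admissible $Z$. Homological triviality of $a_1$ is the key input: the $\ell$-adic cycle class of $\widetilde{a}_1$ vanishes on the generic fiber, so after a vertical correction one expects the restrictions to the bad fibers to lie in a primitive subspace of cohomology orthogonal to $CH^q$-classes of the fibers under the fiberwise intersection pairing.

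The hard part, and the reason this is formulated as a conjecture rather than a theorem, is assembling the local adjustments into a globally well-defined modification of $\widetilde{a}_1$. Fiberwise one must show that the obstruction functional on cycles of each component of a bad fiber lies in the image of intersection with vertical cycles, and the local corrections on different irreducible components must glue compatibly modulo rational equivalence on $\mathcal{X}$. In general this step implicates the conjectural Bloch--Beilinson filtration on Chow groups and weight-monodromy arguments on special fibers, which are not available unconditionally; in particular geometric settings (products of curves, semistable families of abelian varieties, good reduction hypotheses) the obstruction is tractable and the conjecture is known, which suggests this reduction is the right framework even if the final assembly step remains out of reach.
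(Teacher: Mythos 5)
This statement is \cref{conj:BeilCyc}, which the paper explicitly leaves as a conjecture (going back to Beilinson); there is no proof in the paper to compare against, so any complete ``proof'' here should be treated with suspicion. Your proposal, to its credit, does not claim to be one: the reduction via the localization sequence (any extension of $0\in CH^{q}(X)_{\mathbb{Q}}$ is, after shrinking $U$, of the form $j_{*}\beta$ for $\beta$ supported over some proper closed $Z\subset B$), the projection formula $\pi_{*}(\widetilde{a}_{1}\cap j_{*}\beta)=(\pi\circ j)_{*}(j^{!}\widetilde{a}_{1}\cap\beta)$ (legitimate here because $\mathcal{X}$ is regular, hence smooth over the perfect field $k$), and the observation that the available freedom is modification by vertical classes, are all correct in substance, modulo two technical points: the codimension indexing $CH^{q}(\mathcal{X}_{Z})$ needs care since $\mathcal{X}_{Z}$ need not be equidimensional, and $Z$ depends on $\widetilde{a}_{2}$, so the Beilinson extension must kill the obstruction functional for \emph{all} proper closed $Z$ at once, not for one fixed $Z$.

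The genuine gap is that this reduction is essentially a restatement of the conjecture rather than progress toward it: saying ``the restriction of the corrected $\widetilde{a}_{1}$ to every special fiber pairs to zero against all vertical cycles'' is the conjecture, and the step you defer to Bloch--Beilinson-type inputs is the entire content. Two further checks your framework should, but does not, engage with. First, \cref{rem:BeilCycQ} says the statement is false integrally, so any viable mechanism must visibly use $\mathbb{Q}$-coefficients; nothing in your reduction pinpoints where that enters. Second, the paper's own partial result in this direction is different in nature: \cref{th:BeilConj} shows that an extension $z^{p}_{B}$ whose class dies in $H^{0}_{\textnormal{\'et}}(B,\textnormal{\textbf{R}}^{2p}\pi_{*}\mathbb{Q}_{\ell}(p))$ forces $\widehat{\alpha^{p}_{B}}=0$ and hence the agreement of the cohomological pairings (\cref{th:CompCoh}), and the open existence statement is then recast cohomologically as the second \cref{conj:BeilCyc} in Section 2. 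If you want a tractable target, proving that your cycle-theoretic obstruction vanishes \emph{after applying the cycle class map} (i.e.~the cohomological avatar) is the level at which the paper actually works, and is strictly weaker than the Chow-level statement you set up.
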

\begin{remark}
\label{rem:BeilCycQ}
Note that this conjecture would be false if we did not tensor the Chow groups with $\mathbb{Q}$.
\end{remark}
Assuming the truth of this conjecture, we could promote the geometric height pairing to a genuine height pairing by defining $\langle a_{1}, a_{2}\rangle_{\mathcal{X}}$ to be $\pi_{*}(\widetilde{a}_{1}\cap\widetilde{a}_{2})$, where $\widetilde{a}_{1}$ is a Beilinson extension. For the meantime though, we must keep in mind the dependence on the choice of extension.\\

Work by Kahn in \cite{Kah} has shown that one can define a genuine height pairing on a certain subgroup $CH^{p}(X)^{(0)}\subseteq CH^{p}(X)$:
\begin{align*}
CH^{p}(X)^{(0)}_{\mathbb{Q}}\times CH^{q}(X)^{(0)}_{\mathbb{Q}}\rightarrow\text{Pic}(B)_{\mathbb{Q}}
\end{align*}
with values in $\text{Pic}(B)_{\mathbb{Q}}$, but without using Beilinson cycles. Moreover, it is equal to the Beilinson height pairing in a sense made precise in Proposition 2.11.~of \cite{Kah}.
\subsection{R\"ossler and Szamuely's Pairing}
\label{sec:BeilPair}
Let us now outline the construction of R\"ossler and Szamuely in \cite{Ros-Sam}.\\

Let $\mathcal{X}\xrightarrow{\pi} U$ be a smooth projective model of $X$ over an open affine subset $j:U\hookrightarrow B$. Then take the duality pairing on $U$:
\begin{align*}
\textbf{R}^{2p-1}\pi_{*}\mathbb{Q}_{\ell}(p)[b] \otimes^{\mathbf{L}} \textbf{R}^{2q-1}\pi_{*}\mathbb{Q}_{\ell}(q)[b]\rightarrow\mathbb{Q}_{\ell}(1)[2b]
\end{align*}
which under the Tensor-Hom adjunction corresponds to a mapping
\begin{align*}
\textbf{R}^{2p-1}\pi_{*}\mathbb{Q}_{\ell}(p)[b]\rightarrow D(\textbf{R}^{2q-1}\pi_{*}\mathbb{Q}_{\ell}(q)[b])
\end{align*}
where $D(-)=\mathbf{R}\textit{Hom}(-,\mathbb{Q}_{\ell}(b)[2b])$ is the duality functor. Now applying the intermediate extension functor $j_{!*}$, using the fact that $j_{!*}$ commutes with duality we get
\begin{align*}
j_{!*}\textbf{R}^{2p-1}\pi_{*}\mathbb{Q}_{\ell}(p)[b]\rightarrow D(j_{!*}\textbf{R}^{2q-1}\pi_{*}\mathbb{Q}_{\ell}(q)[b])
\end{align*}
which corresponds to a pairing
\begin{align*}
j_{!*}\textbf{R}^{2p-1}\pi_{*}\mathbb{Q}_{\ell}(p)[b] \otimes^{\mathbf{L}} j_{!*}\textbf{R}^{2q-1}\pi_{*}\mathbb{Q}_{\ell}(q)[b]\rightarrow\mathbb{Q}_{\ell}(1)[2b]
\end{align*}
in $D^{b}_{c}(B,\mathbb{Q}_{\ell})$ which we call $h_{U}$.\\

Finally, by taking cohomology this descends to a pairing
\begin{align*}
H^{1-b}_{\text{\'et}}(B,j_{!*}\textbf{R}^{2p-1}\pi_{*}\mathbb{Q}_{\ell}(p)[b]) \times H^{1-b}_{\text{\'et}}(B,j_{!*}\textbf{R}^{2q-1}\pi_{*}\mathbb{Q}_{\ell}(q)[b])\rightarrow H^{2}_{\text{\'et}}(B,\mathbb{Q}_{\ell}(1))
\end{align*}
which we shall also denote by $h_{U}(-,-)$.\\

In order then to define a height pairing on the Chow groups, they construct a class $\alpha^{p}_{U}\in H^{1-b}_{\text{\'et}}(B,j_{!*}\textbf{R}^{2p-1}\pi_{*}\mathbb{Q}_{\ell}(p)[b])$ from an element $\alpha^{p}\in CH^{p}_{\text{hom}}(X)_{\mathbb{Q}}$ (and likewise for $q$), and define
\begin{align*}
\langle\alpha^{p},\alpha^{q}\rangle_{U}:=h_{U}(\alpha^{p}_{U},\alpha^{q}_{U})
\end{align*}
In more detail, $\alpha^{p}_{U}$ is defined in the following way. One takes a representative $z$ of $\alpha^{p}$ and extends it to a cycle $z_{U}$ on the model $\mathcal{X}$ (we can always shrink $U$ if necessary). Since $z$ is homologically trivial, by the smooth proper base change theorem $\text{cl}(z_{U})\in H^{2p}_{\text{\'et}}(\mathcal{X},\mathbb{Q}_{\ell}(p))$ lies in the kernel of the map
\begin{align*}
H^{2p}_{\text{\'et}}(\mathcal{X},\mathbb{Q}_{\ell}(p))\rightarrow H^{0}_{\text{\'et}}(U,\mathbf{R}^{2p}\pi_{*}\mathbb{Q}_{\ell}(p))
\end{align*}
and therefore produces an element $\alpha^{p}_{U}\in H^{1}_{\text{\'et}}(U,\mathbf{R}^{2p-1}\pi_{*}\mathbb{Q}_{\ell}(p))$, which they in turn prove always lies inside the subspace
\begin{align*}
H^{1-b}_{\text{\'et}}(B,j_{!*}\textbf{R}^{2p-1}\pi_{*}\mathbb{Q}_{\ell}(p)[b])\subset H^{1}_{\text{\'et}}(U,\mathbf{R}^{2p-1}\pi_{*}\mathbb{Q}_{\ell}(p))
\end{align*}
Finally, let us briefly discuss how one could use this approach to build a conjectural motivic height pairing, i.e. a pairing
\begin{align*}
\langle\cdot,\cdot\rangle:CH^{p}_{\text{hom}}(X)_{\mathbb{Q}}\times CH^{q}_{\text{hom}}(X)_{\mathbb{Q}}\rightarrow \text{Pic}(B)_{\mathbb{Q}}
\end{align*}
This construction hinges on the existence of a category $M(B,\mathbb{Q})$ of motivic $\mathbb{Q}$-sheaves analogous to the category of $\mathbb{Q}_{\ell}$-sheaves, as described in Chapter 5.10.~of \cite{Bei}. One also expects a perverse t-structure on the derived category of $M(B,\mathbb{Q})$, which would allow us to define a corresponding intermediate extension functor.\\

One could then, by the same argument, define a pairing
\begin{align*}
H^{1-b}(B,j_{!*}\textbf{R}^{2p-1}\pi_{*}\mathbb{Q}(p)[b]) \times H^{1-b}(B,j_{!*}\textbf{R}^{2q-1}\pi_{*}\mathbb{Q}(q)[b])\rightarrow H^{2}(B,\mathbb{Q}(1))=\text{Pic}(B)_{\mathbb{Q}}
\end{align*}
where of course $H^{i}(B,-)=\text{Ext}^{i}_{M(B,\mathbb{Q})}(\mathbb{Q},-)$. This pairing then induces the one on the Chow groups.\\

A candidate for the category $\text{Perv}(B,\mathbb{Q})$ in the case when $k$ is of characteristic zero, with an embedding $k\hookrightarrow\mathbb{C}$, is given by Ivorra and Morel in \cite{Ivo-Mor}. See Definition 2.1.~in their paper for a definition of the category of perverse motives $\mathcal{M}(B)$. They also prove in Theorem 5.1.~that on the associated bounded derived category $D^{b}(\mathcal{M}(B))$ exists a pullback, pushforward, exceptional pullback and exceptional pushforward functor.
\subsection{Summary of the Main Results}
\label{sec:Summ}
The main content of \cref{sec:CompHght} is to construct a third height pairing $\langle -,-\rangle_{B}$ which can act as an intermediary to compare R\"ossler and Szamuely's pairing to the geometric one. Indeed, we have the following two results
\begin{statement}{\cref{th:Coh=Geo}}
For any $\alpha^{p}\in CH^{p}_{\textnormal{hom}}(X)_{\mathbb{Q}}$, $\alpha^{q}\in CH^{q}_{\textnormal{hom}}(X)_{\mathbb{Q}}$ we have
\begin{align*}
\langle\alpha^{p},\alpha^{q}\rangle_{B}=\textnormal{cl}_{B}\langle\alpha^{p},\alpha^{q}\rangle_{\mathcal{X}}
\end{align*}
\end{statement}
\begin{statement}{\cref{th:CompCoh}}
For any $\alpha^{p}\in CH^{p}_{\textnormal{hom}}(X)_{\mathbb{Q}}$, $\alpha^{q}\in CH^{q}_{\textnormal{hom}}(X)_{\mathbb{Q}}$ we have
\begin{align*}
\langle\alpha^{p},\alpha^{q}\rangle_{B}=\langle\alpha^{p},\alpha^{q}\rangle_{U}+h_{B}(\widehat{\alpha^{p}_{B}},\widehat{\alpha^{q}_{B}})
\end{align*}
Moreover, if we can find a representative $z^{p}_{B}$ such that the image of $\textnormal{cl}(z^{p}_{B})$ inside $H^{0}_{\textnormal{\'et}}(B,\textnormal{\textbf{R}}^{2p}\pi_{*}\mathbb{Q}_{\ell}(p)))$ is zero, then for the corresponding $\alpha^{p}_{B}$ we have $\widehat{\alpha^{p}_{B}}=0$, and consequently
\begin{align*}
\langle\alpha^{p},\alpha^{q}\rangle_{B}=\langle\alpha^{p},\alpha^{q}\rangle_{U}
\end{align*}
\end{statement}
Here $h_{B}$ is the cohomological pairing from which $\langle -,-\rangle_{B}$ is defined (in the same way that $\langle -,-\rangle_{U}$ is defined from $h_{U}$). The meaning of $\widehat{\alpha^{p}_{B}}$ is explained in \cref{sec:CohPair}. Putting these two theorems together we have
\begin{statement}{\cref{cor:Geo=Perv}}
For any $\alpha^{p}\in CH^{p}_{\textnormal{hom}}(X)_{\mathbb{Q}}$, $\alpha^{q}\in CH^{q}_{\textnormal{hom}}(X)_{\mathbb{Q}}$ we have
\begin{align*}
\textnormal{cl}_{B}\langle\alpha^{p},\alpha^{q}\rangle_{\mathcal{X}}=\langle\alpha^{p},\alpha^{q}\rangle_{U}+h_{B}(\widehat{\alpha^{p}_{B}},\widehat{\alpha^{q}_{B}})
\end{align*}
\end{statement}
\cref{sec:ProjForm} meanwhile is devoted to proving a projection formula for R\"ossler and Szamuely's pairing, which we also denote as $\langle -,-\rangle_{X}$.
\begin{statement}{\cref{th:ProjForm}}
For any $\alpha^{p}\in CH_{\textnormal{hom}}^{p}(X')_{\mathbb{Q}}$ and $\beta^{q}\in CH_{\textnormal{hom}}^{q}(X)_{\mathbb{Q}}$ the projection formula holds
\begin{align*}
\langle g_{*}\alpha^{p},\beta^{q}\rangle_{X}=f_{*}\langle \alpha^{p},g^{*}\beta^{q}\rangle_{X'}
\end{align*}
\end{statement}
where here $f:B'\rightarrow B$ is a dominant proper morphism of relative dimension $0$ which is generically finite (with $K'\rightarrow K$ the induced map on function fields), and $g:X'\rightarrow X$ is a finite map which fits into a commutative diagram
\begin{center}
\begin{tikzcd}
  X' \arrow[r, "g"] \arrow[d]
  & X \arrow[d] \\
  K' \arrow[r]
  & K
\end{tikzcd}
\end{center}
\section{Comparing Height Pairings}
\label{sec:CompHght}
In order to compare the pairing $\langle -,-\rangle_{U}$ (which we shall refer to simply as Beilinson's pairing) to the geometric one, we define a new height pairing as follows.\\

For this section, let $\mathcal{X}\xrightarrow{\pi} B$ be a regular projective model (which may not be smooth), and let $U$ be an open affine subset of $B$ such that $\mathcal{X}_{U}\xrightarrow{\pi_{U}} U$ is a smooth model of $X$. Also, let $j:U\hookrightarrow B$ the open embedding, and $i:Z=B\setminus U\hookrightarrow B$ the closed immersion.\\

We shall make use of the decomposition theorem for $\pi$ (one can find a proof in \cite{Bei-Del}, or see Theorem 1.3.1.~in \cite{Cat} for an overview), which gives us
\begin{align}
\label{eq:DecompThm}
\mathbf{R}\pi_{*}\mathbb{Q}_{\ell}(p)\simeq\bigoplus_{k\geq 0} \big(j_{!*}(\mathbf{R}^{k}\pi_{U*}\mathbb{Q}_{\ell}(p)[b])\oplus C_{k}\big)[-k-b]
\end{align}
where each $C_{k}=i_{*}\widehat{C}_{k}$ is a perverse sheaf supported on $Z$. Indeed, one can check this by restricting to $U$ and noting that the decomposition theorem for $\pi_{U}$ tells us
\begin{align*}
\mathbf{R}\pi_{U*}\mathbb{Q}_{\ell}(p)\simeq\bigoplus_{k\geq 0} \mathbf{R}^{k}\pi_{U*}\mathbb{Q}_{\ell}(p)[-k]
\end{align*}
\subsection{A New Cohomological Pairing}
First of all, consider the perverse spectral sequence
\begin{align*}
H^{s}_{\text{\'et}}(B,{}^{\mathfrak{p}}\mathcal{H}^{t}(\textbf{R}\pi_{*}\mathbb{Q}_{\ell}(p)))\Rightarrow H^{s+t}_{\text{\'et}}(\mathcal{X},\mathbb{Q}_{\ell}(p))
\end{align*}
Given $\alpha^{p}\in CH^{p}_{\text{hom}}(X)_{\mathbb{Q}}$, extending a representative $z^{p}$ of $\alpha^{p}$ to a cycle $z^{p}_{B}$ on the model $\mathcal{X}\rightarrow B$, we get a class $\text{cl}(z^{p}_{B})\in H^{2p}_{\text{\'et}}(\mathcal{X},\mathbb{Q}_{\ell}(p))$.
\begin{lemma}
\label{lem:Vnsh}
We have that
\begin{align*}
H^{c}_{\textnormal{\'et}}(B,{}^{\mathfrak{p}}\mathcal{H}^{2p-c}(\textnormal{\textbf{R}}\pi_{*}\mathbb{Q}_{\ell}(p)))=0
\end{align*}
for all $c<-b$.
\end{lemma}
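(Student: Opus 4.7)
The plan is to observe that ${}^{\mathfrak{p}}\mathcal{H}^{2p-c}(\mathbf{R}\pi_{*}\mathbb{Q}_{\ell}(p))$ is, by construction, a perverse sheaf on $B$, and then to invoke the standard fact that any perverse sheaf on a $b$-dimensional variety has \'etale hypercohomology concentrated in degrees $[-b, b]$.

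One can identify this perverse sheaf explicitly by applying ${}^{\mathfrak{p}}\mathcal{H}^{2p-c}$ to the decomposition \eqref{eq:DecompThm}, obtaining
$${}^{\mathfrak{p}}\mathcal{H}^{2p-c}(\mathbf{R}\pi_{*}\mathbb{Q}_{\ell}(p)) \;\simeq\; j_{!*}\bigl(\mathbf{R}^{2p-c-b}\pi_{U*}\mathbb{Q}_{\ell}(p)[b]\bigr) \oplus C_{2p-c-b}$$
for $2p - c \geq b$, and zero otherwise, both summands being manifestly perverse. Strictly speaking one does not need the decomposition theorem here, since perverse cohomology sheaves are always perverse; but this description makes the input to the next step transparent.

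The vanishing $H^{c}_{\text{\'et}}(B, \mathcal{F}) = 0$ for an arbitrary perverse sheaf $\mathcal{F}$ on $B$ and $c < -b$ I would deduce from the hypercohomology spectral sequence
$$E_{2}^{s,t} = H^{s}_{\text{\'et}}(B, \mathcal{H}^{t}(\mathcal{F})) \;\Longrightarrow\; H^{s+t}_{\text{\'et}}(B, \mathcal{F}).$$
The support and co-support conditions defining the middle-perverse t-structure together force $\mathcal{H}^{t}(\mathcal{F}) = 0$ unless $-b \leq t \leq 0$, with $\dim \textnormal{supp}(\mathcal{H}^{t}(\mathcal{F})) \leq -t$ in that range. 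Artin--Grothendieck vanishing in \'etale cohomology then gives $E_{2}^{s,t} = 0$ unless $0 \leq s \leq -2t$, so that any nonzero $E_{2}^{s,t}$ contributing to $H^{n}_{\text{\'et}}(B, \mathcal{F})$ satisfies $n = s + t \geq -b$, as required.

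The one point requiring a little attention, and in that sense the main obstacle, is the lower bound $t \geq -b$ on the ordinary cohomology sheaves of a perverse sheaf on $B$: this uses the \emph{co-support} half of the perverse axioms rather than just the support half. Once that standard fact is in hand, the rest is routine bookkeeping of indices and shifts.
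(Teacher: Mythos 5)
Your proof is correct and follows essentially the same route as the paper: both arguments reduce to the fact that ${}^{\mathfrak{p}}\mathcal{H}^{2p-c}(\mathbf{R}\pi_{*}\mathbb{Q}_{\ell}(p))$ is perverse, hence has ordinary cohomology sheaves concentrated in degrees $[-b,0]$ (the paper cites Lemma III.5.13 of Kiehl--Weissauer for exactly the lower bound you isolate as the key point), and then run the hypercohomology spectral sequence with $E_{2}^{s,t}=0$ for $s<0$ or $t<-b$. Your extra remarks (the explicit identification via the decomposition theorem and the Artin--Grothendieck upper bound on $s$) are correct but not needed for the conclusion.
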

\begin{proof}
Applying Lemma III.5.13.~in \cite{Kie} to $\mathcal{G}={}^{\mathfrak{p}}\mathcal{H}^{2p-c}(\mathbf{R}\pi_{*}\mathbb{Q}_{\ell}(p))$ we get that
\begin{align*}
\mathcal{H}^{c}(\mathcal{G})=0
\end{align*}
for all $c<-b$ since $\text{dim}(B)\leq b$. So using the hypercohomolgy spectral sequence
\begin{align*}
E^{s,t}_{2}=H^{s}_{\text{\'et}}(B,\mathcal{H}^{t}(\mathcal{G}))\Rightarrow H^{s+t}_{\text{\'et}}(B,\mathcal{G})
\end{align*}
since $H^{s}_{\text{\'et}}(B,\mathcal{H}^{t}(\mathcal{G}))=0$ for $s<0$ or $t<-b$, the lemma follows.

\end{proof}
Hence the first non-zero term with grading $2p$ is $H^{-b}_{\text{\'et}}(B,{}^{\mathfrak{p}}\mathcal{H}^{2p+b}(\textbf{R}\pi_{*}\mathbb{Q}_{\ell}(p)))$.
\begin{lemma}
\label{lem:ImCyc}
The image of $\textnormal{cl}(z^{p}_{B})$ in $H^{-b}_{\textnormal{\'et}}(B,{}^{\mathfrak{p}}\mathcal{H}^{2p+b}(\textnormal{\textbf{R}}\pi_{*}\mathbb{Q}_{\ell}(p)))$ is zero.
\end{lemma}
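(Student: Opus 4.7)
My plan is to split ${}^{\mathfrak{p}}\mathcal{H}^{2p+b}(\mathbf{R}\pi_{*}\mathbb{Q}_{\ell}(p))$ via the decomposition theorem \eqref{eq:DecompThm} into its $j_{!*}$ part and its part supported on $Z$, and then check that the image of $\textnormal{cl}(z^{p}_{B})$ vanishes in the cohomology of each summand. Taking the $k = 2p$ piece of \eqref{eq:DecompThm} gives
\[
{}^{\mathfrak{p}}\mathcal{H}^{2p+b}(\mathbf{R}\pi_{*}\mathbb{Q}_{\ell}(p)) \simeq j_{!*}(\mathbf{R}^{2p}\pi_{U*}\mathbb{Q}_{\ell}(p)[b]) \oplus C_{2p},
\]
so the target decomposes as $H^{-b}_{\textnormal{\'et}}(B, j_{!*}F) \oplus H^{-b}_{\textnormal{\'et}}(Z, \widehat{C}_{2p})$, where I abbreviate $F := \mathbf{R}^{2p}\pi_{U*}\mathbb{Q}_{\ell}(p)[b]$.

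The second summand vanishes for dimension reasons. Indeed, $\widehat{C}_{2p}$ is a perverse sheaf on $Z$ and $\dim Z < b$, so replaying the argument of \cref{lem:Vnsh} on $Z$ gives $\mathcal{H}^{c}(\widehat{C}_{2p}) = 0$ for $c \leq -b$, after which the hypercohomology spectral sequence yields $H^{-b}_{\textnormal{\'et}}(Z, \widehat{C}_{2p}) = 0$.

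For the first summand I would proceed in two steps. First, I claim that the restriction
\[
H^{-b}_{\textnormal{\'et}}(B, j_{!*}F) \to H^{-b}_{\textnormal{\'et}}(U, F) = H^{0}_{\textnormal{\'et}}(U, \mathbf{R}^{2p}\pi_{U*}\mathbb{Q}_{\ell}(p))
\]
is injective. The localisation triangle $i_{*}i^{!}(j_{!*}F) \to j_{!*}F \to \mathbf{R}j_{*}F \to$ reduces this to $H^{-b}_{\textnormal{\'et}}(Z, i^{!}j_{!*}F) = 0$; the defining property of the intermediate extension gives $i^{!}j_{!*}F \in {}^{\mathfrak{p}}D^{\geq 1}(Z)$, and translating to the ordinary t-structure its cohomology sheaves sit in degrees $\geq 1 - \dim Z \geq 2 - b$, which the hypercohomology spectral sequence then kills in degree $-b$. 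Second, by compatibility of the perverse Leray spectral sequence for $\pi$ with the ordinary Leray spectral sequence for $\pi_{U}$ (using perverse t-exactness of $j^{*}$ for an open immersion), the image of $\textnormal{cl}(z^{p}_{B})$ under the above composition coincides with the Leray edge-map image of $\textnormal{cl}(z^{p}_{B})|_{\mathcal{X}_{U}}$ in $H^{0}_{\textnormal{\'et}}(U, \mathbf{R}^{2p}\pi_{U*}\mathbb{Q}_{\ell}(p))$. Since $\mathbf{R}^{2p}\pi_{U*}\mathbb{Q}_{\ell}(p)$ is lisse on the connected $U$, a global section is detected by its generic stalk, which here is $\textnormal{cl}(z^{p}) \in H^{2p}_{\textnormal{\'et}}(X_{\overline{K}}, \mathbb{Q}_{\ell}(p))$; this vanishes by the hypothesis $\alpha^{p} \in CH^{p}_{\textnormal{hom}}(X)_{\mathbb{Q}}$, which completes the argument.

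The main obstacle is the injectivity step, where one must convert the perverse-theoretic statement $i^{!}j_{!*}F \in {}^{\mathfrak{p}}D^{\geq 1}(Z)$ into ordinary cohomological vanishing in degree $-b$. This is where the strict inequality $\dim Z < b$ does the real work, through the standard bookkeeping comparing the perverse and ordinary t-structures on $D^{b}_{c}(Z)$; everything else is then either a direct consequence of the decomposition theorem or a formal base-change/lisse-ness argument on $U$.
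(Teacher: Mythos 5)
Your proposal is correct and follows essentially the same route as the paper: split ${}^{\mathfrak{p}}\mathcal{H}^{2p+b}(\mathbf{R}\pi_{*}\mathbb{Q}_{\ell}(p))$ via the decomposition theorem, kill the $Z$-supported summand by the dimension argument of \cref{lem:Vnsh}, show $H^{-b}_{\text{\'et}}(B,j_{!*}\mathbf{R}^{2p}\pi_{U*}\mathbb{Q}_{\ell}(p)[b])$ injects into $H^{0}_{\text{\'et}}(U,\mathbf{R}^{2p}\pi_{U*}\mathbb{Q}_{\ell}(p))$, and conclude by the compatibility of the two Leray-type spectral sequences together with smooth proper base change. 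The only (immaterial) difference is that you establish the injectivity from the localization triangle and $i^{!}j_{!*}\mathcal{F}\in{}^{\mathfrak{p}}D^{\geq 1}(Z)$, whereas the paper uses the triangle of Kiehl--Weissauer involving ${}^{\mathfrak{p}}\tau_{\geq 0}i^{*}\mathbf{R}j_{*}\mathcal{F}$; both rest on the same characterization of the intermediate extension and the bound $\dim Z\leq b-1$.
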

\begin{proof}
Using the decomposition theorem \eqref{eq:DecompThm} we have that
\begin{align*}
{}^{\mathfrak{p}}\mathcal{H}^{2p+b}(\textbf{R}\pi_{*}\mathbb{Q}_{\ell}(p))&\simeq\bigoplus_{k\geq 0}{}^{\mathfrak{p}}\mathcal{H}^{2p-k}(j_{!*}(\textbf{R}^{k}\pi_{U*}\mathbb{Q}_{\ell}(p)[b])\oplus C_{k})\\
&=j_{!*}\textbf{R}^{2p}\pi_{U*}\mathbb{Q}_{\ell}(p)[b]\oplus C_{2p}
\end{align*}
where the second equality uses the fact that the $j_{!*}(\textbf{R}^{k}\pi_{U*}\mathbb{Q}_{\ell}(p)[b])$ and $C_{k}$ are perverse sheaves. Note that since $C_{2p}$ is supported on $Z$, by the same argument as \cref{lem:Vnsh}, using that $\text{dim}(Z)\leq b-1$, we have
\begin{align*}
H^{-b}_{\text{\'et}}(B,C_{2p})=0
\end{align*}
which gives us that
\begin{align*}
H^{-b}_{\text{\'et}}(B,{}^{\mathfrak{p}}\mathcal{H}^{2p+b}(\textbf{R}\pi_{*}\mathbb{Q}_{\ell}(p)))=H^{-b}_{\text{\'et}}(B,j_{!*}\textbf{R}^{2p}\pi_{U*}\mathbb{Q}_{\ell}(p)[b])
\end{align*}
Using the distinguished triangle in Lemma III.5.1.~of \cite{Kie} for $\mathcal{F}=\textbf{R}^{2p}\pi_{U*}\mathbb{Q}_{\ell}(p)[b]$, we get an exact sequence of cohomology groups
\begin{align*}
H^{-1-b}_{\text{\'et}}(B,i_{*}{}^{\mathfrak{p}}\tau_{\geq 0}i^{*}\mathbf{R}j_{*}\mathcal{F})\rightarrow H^{-b}_{\text{\'et}}(B,j_{!*}\mathcal{F})\rightarrow H^{-b}_{\text{\'et}}(B,\mathbf{R}j_{*}\mathcal{F})
\end{align*}
where because ${}^{\mathfrak{p}}\tau_{\geq 0}i^{*}\mathbf{R}j_{*}\mathcal{F}$ is supported on $Z$, and $i_{*}$ is exact, we must have that the left hand group vanishes $H^{-1-b}_{\text{\'et}}(B,i_{*}{}^{\mathfrak{p}}\tau_{\geq 0}i^{*}\mathbf{R}j_{*}\mathcal{F})=0$.\\

Hence $H^{-b}_{\text{\'et}}(B,{}^{\mathfrak{p}}\mathcal{H}^{2p+b}(\textbf{R}\pi_{*}\mathbb{Q}_{\ell}(p)))$ injects into $H^{0}_{\text{\'et}}(U,\textbf{R}^{2p}\pi_{U*}\mathbb{Q}_{\ell}(p))$. So now, using the map of spectral sequences
\begin{equation}
\label{eq:SpecSeq}
\begin{tikzcd}
  H^{i-b}_{\text{\'et}}(B,{}^{\mathfrak{p}}\mathcal{H}^{2p-i+b}(\textbf{R}\pi_{*}\mathbb{Q}_{\ell}(p))) \arrow[Rightarrow, shorten=5]{r} \arrow[d]
  & H^{2p}_{\text{\'et}}(\mathcal{X},\mathbb{Q}_{\ell}(p)) \arrow[d] \\
  H^{i}_{\text{\'et}}(U,\textbf{R}^{2p-i}\pi_{*}\mathbb{Q}_{\ell}(p)) \arrow[Rightarrow, shorten=5]{r}
  & H^{2p}_{\text{\'et}}(\mathcal{X}_{U},\mathbb{Q}_{\ell}(p))
\end{tikzcd}
\end{equation}
we get a commutative diagram
\begin{center}
\begin{tikzcd}
  H^{2p}_{\text{\'et}}(\mathcal{X},\mathbb{Q}_{\ell}(p)) \arrow[d] \arrow[r] 
  & H^{2p}_{\text{\'et}}(\mathcal{X}_{U},\mathbb{Q}_{\ell}(p)) \arrow[d] \\ 
   H^{-b}_{\text{\'et}}(B,{}^{\mathfrak{p}}\mathcal{H}^{2p+b}(\textbf{R}\pi_{*}\mathbb{Q}_{\ell}(p))) \arrow[hookrightarrow]{r}
  & H^{0}_{\text{\'et}}(U,\mathbf{R}^{2p}\pi_{U*}\mathbb{Q}_{\ell}(p))
\end{tikzcd}
\end{center}
where because the image of $\text{cl}(z^{p}_{U})$ in $H^{0}_{\text{\'et}}(U,\mathbf{R}^{2p}\pi_{U*}\mathbb{Q}_{\ell}(p))$ is zero (this is implied by the smooth proper base change theorem since $z^{p}$ is homologically trivial), the image of $\textnormal{cl}(z^{p}_{B})$ inside $H^{-b}_{\text{\'et}}(B,{}^{\mathfrak{p}}\mathcal{H}^{2p+b}(\textbf{R}\pi_{*}\mathbb{Q}_{\ell}(p)))$ is also zero, and the lemma follows.

\end{proof}
Let $P_{p}=\text{Ker}(H^{2p}_{\text{\'et}}(\mathcal{X},\mathbb{Q}_{\ell}(p))\rightarrow H^{-b}_{\text{\'et}}(B,{}^{\mathfrak{p}}\mathcal{H}^{2p+b}(\textbf{R}\pi_{*}\mathbb{Q}_{\ell}(p)))$ and likewise for $P_{q}$. By the lemma then we have that $\text{cl}(z^{p}_{B})\in P_{p}$, $\text{cl}(z^{q}_{B})\in P_{q}$.
\begin{definition}
\label{def:PervCyc}
We define the class
\begin{align*}
\alpha^{p}_{B}\in H^{1-b}_{\text{\'et}}(B,{}^{\mathfrak{p}}\mathcal{H}^{2p-1+b}(\textbf{R}\pi_{*}\mathbb{Q}_{\ell}(p)))
\end{align*}
to be the image of $\text{cl}(z^{p}_{B})\in P_{p}$ under the map
\begin{align*}
P_{p}\rightarrow H^{1-b}_{\text{\'et}}(B,{}^{\mathfrak{p}}\mathcal{H}^{2p-1+b}(\textbf{R}\pi_{*}\mathbb{Q}_{\ell}(p)))
\end{align*}
and similarly for $q$.
\end{definition}

Now, starting with the tensor product pairing
\begin{align*}
\mathbb{Q}_{\ell}(p)\otimes\mathbb{Q}_{\ell}(q)\rightarrow\mathbb{Q}_{\ell}(d+1)
\end{align*}
this induces the derived pairing
\begin{align*}
\mathbf{R}\pi_{*}\mathbb{Q}_{\ell}(p)\otimes^{\textbf{L}}\mathbf{R}\pi_{*}\mathbb{Q}_{\ell}(q)\rightarrow\mathbf{R}\pi_{*}\mathbb{Q}_{\ell}(d+1)
\end{align*}
We have the trace map $\mathbf{R}\pi_{*}\mathbb{Q}_{\ell}(d+b)[2(d+b)]=\mathbf{R}\pi_{!}\pi^{!}\mathbb{Q}_{\ell}(b)[2b]\rightarrow\mathbb{Q}_{\ell}(b)[2b]$, where we have used that $\mathcal{X}$ is regular to deduce
\begin{align*}
\pi^{!}\mathbb{Q}_{\ell}(b)[2b]&=\pi^{!}K_{B}\\
&=\pi^{!}g^{!}K_{k}\\
&=(g\circ\pi)^{!}K_{k}=K_{\mathcal{X}}=\mathbb{Q}_{\ell}(d+b)[2(d+b)]
\end{align*}
Here $g:B\rightarrow k$ is the structure morphism and $K_{Y}$ denotes the dualizing complex on a scheme $Y$ over $k$. Indeed, $\mathcal{X}$ regular means that $g\circ\pi$ is smooth, yielding the final equality.\\

So we get a map
\begin{align*}
\mathbf{R}\pi_{*}\mathbb{Q}_{\ell}(d+1)\rightarrow\mathbb{Q}_{\ell}(1)[-2d]
\end{align*}
which if we compose with the previous pairing gives us
\begin{align}
\label{eq:DerPair}
\mathbf{R}\pi_{*}\mathbb{Q}_{\ell}(p)[2p-1+b]\otimes^{\textbf{L}}\mathbf{R}\pi_{*}\mathbb{Q}_{\ell}(q-1+b)[2q-1+b]\rightarrow\mathbb{Q}_{\ell}(b)[2b]
\end{align}
Since $\mathbb{Q}_{\ell}(b)[2b]$ is the dualizing sheaf on $B$, this corresponds to a map
\begin{align}
\label{eq:DerMap}
\mathbf{R}\pi_{*}\mathbb{Q}_{\ell}(p)[2p-1+b]\rightarrow D(\mathbf{R}\pi_{*}\mathbb{Q}_{\ell}(q-1+b)[2q-1+b])
\end{align}
Noting that ${}^{\mathfrak{p}}\tau_{\leq 0}D=D {}^{\mathfrak{p}}\tau_{\geq 0}$ and ${}^{\mathfrak{p}}\tau_{\geq 0}D=D {}^{\mathfrak{p}}\tau_{\leq 0}$ because of the self dual nature of the perverse t-structure, we see that ${}^{\mathfrak{p}}\mathcal{H}^{0}D=D {}^{\mathfrak{p}}\mathcal{H}^{0}$ and thus by taking perverse cohomology we have a map
\begin{align*}
{}^{\mathfrak{p}}\mathcal{H}^{0}(\mathbf{R}\pi_{*}\mathbb{Q}_{\ell}(p)[2p-1+b])\rightarrow D({}^{\mathfrak{p}}\mathcal{H}^{0}(\mathbf{R}\pi_{*}\mathbb{Q}_{\ell}(q-1+b)[2q-1+b]))
\end{align*}
which then corresponds to a pairing $h_{B}$
\begin{align*}
{}^{\mathfrak{p}}\mathcal{H}^{2p-1+b}(\mathbf{R}\pi_{*}\mathbb{Q}_{\ell}(p))\otimes^{\mathbf{L}}{}^{\mathfrak{p}}\mathcal{H}^{2q-1+b}(\mathbf{R}\pi_{*}\mathbb{Q}_{\ell}(q))\rightarrow\mathbb{Q}_{\ell}(1)[2b]
\end{align*}
Finally, taking cohomology gives us the cohomological pairing $h_{B}(-,-)$
\begin{align*}
H^{1-b}_{\text{\'et}}(B,{}^{\mathfrak{p}}\mathcal{H}^{2p-1+b}(\textbf{R}\pi_{*}\mathbb{Q}_{\ell}(p)))\times H^{1-b}_{\text{\'et}}(B,{}^{\mathfrak{p}}\mathcal{H}^{2q-1+b}(\textbf{R}\pi_{*}\mathbb{Q}_{\ell}(q)))\rightarrow H^{2}_{\text{\'et}}(B,\mathbb{Q}_{\ell}(1))
\end{align*}

We would like then to define the height pairing $\langle -,-\rangle_{B}$ by setting
\begin{align*}
\langle \alpha^{p},\alpha^{q}\rangle_{B}:=h_{B}(\alpha^{p}_{B},\alpha^{q}_{B})
\end{align*}
however one must be aware that this may depend on the choice of $z^{p}_{B}$ extending $z^{p}$, and likewise for $z^{q}_{B}$.\\

Finally, let us briefly describe how to proceed should one not have a regular projective model available, as could theoretically happen in the positive characteristic case. We can at least still take a projective model $\mathcal{X}\rightarrow B$ of $X$.\\

With the same notation as before, we consider $\text{cl}(z^{p}_{U})\in H^{2p}_{\text{\'et}}(\mathcal{X}_{U},\mathbb{Q}_{\ell}(p))$, where $z^{p}_{U}$ is some extension of $z^{p}$ on $\mathcal{X}_{U}$. Now, using de Jong's theorem on alterations \cite{dJo} to find a proper and generically \'etale morphism $\mathcal{W}\rightarrow\mathcal{X}$ over $k$ such that $\mathcal{W}$ is regular, we can prove the following.
\begin{lemma}
\label{lem:ResCyc}
The class $\textnormal{cl}(z^{p}_{U})$ lies in the image of the restriction map
\begin{align*}
H^{2p-b-d}_{\textnormal{\'et}}(\mathcal{X},\textnormal{IC}_{\mathcal{X}}(p))\rightarrow H^{2p}_{\textnormal{\'et}}(\mathcal{X}_{U},\mathbb{Q}_{\ell}(p))
\end{align*}
\end{lemma}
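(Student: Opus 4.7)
The plan is to use the alteration $\mathcal{W}$ to pull the cycle back to a regular ambient scheme, form the étale cycle class there, and then transport it down to $H^{2p-b-d}_{\text{\'et}}(\mathcal{X},\text{IC}_{\mathcal{X}}(p))$ via the $\text{IC}_{\mathcal{X}}(p)$-summand of $\mathbf{R}g_{*}\text{IC}_{\mathcal{W}}(p)$ furnished by the decomposition theorem.

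Set $n=\deg g$. After shrinking $U$, we may assume $g_{U}:=g|_{g^{-1}\mathcal{X}_{U}}$ is finite étale of degree $n$. Pull $z^{p}_{U}$ back along $g_{U}$ and take closures in the regular scheme $\mathcal{W}$ to obtain an extension $\widetilde{z}^{p}$ of $g_{U}^{*}z^{p}_{U}$ to a codimension $p$ cycle on $\mathcal{W}$. Since $\mathcal{W}$ is regular and $k$ is algebraically closed, $\mathcal{W}$ is smooth over $k$ of dimension $d+b$, so $\text{IC}_{\mathcal{W}}(p)=\mathbb{Q}_{\ell}(p)[d+b]$, and
\begin{align*}
\text{cl}(\widetilde{z}^{p})\in H^{2p}_{\text{\'et}}(\mathcal{W},\mathbb{Q}_{\ell}(p))=H^{2p-d-b}_{\text{\'et}}(\mathcal{X},\mathbf{R}g_{*}\text{IC}_{\mathcal{W}}(p))
\end{align*}

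Next apply the decomposition theorem to the proper morphism $g$: the complex $\mathbf{R}g_{*}\text{IC}_{\mathcal{W}}(p)$ is isomorphic to a direct sum of shifts of simple perverse sheaves on $\mathcal{X}$, and $\text{IC}_{\mathcal{X}}(p)$ appears among these summands. Indeed, on the étale locus $\mathcal{X}_{U}$ the sheaf $\mathbf{R}g_{*}\mathbb{Q}_{\ell}(p)=g_{U*}\mathbb{Q}_{\ell}(p)$ is a rank-$n$ local system which contains $\mathbb{Q}_{\ell}(p)$ as a direct summand via the unit of adjunction, retracted by $\tfrac{1}{n}\,\text{tr}$; applying $j_{!*}$ yields a copy of $\text{IC}_{\mathcal{X}}(p)$ inside $\mathbf{R}g_{*}\text{IC}_{\mathcal{W}}(p)$. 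Let $\gamma\in H^{2p-d-b}_{\text{\'et}}(\mathcal{X},\text{IC}_{\mathcal{X}}(p))$ be the image of $\text{cl}(\widetilde{z}^{p})$ under the corresponding (appropriately normalized) retraction.

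It remains to check that $\gamma$ restricts to $\text{cl}(z^{p}_{U})$. By our normalization, the retraction $\mathbf{R}g_{*}\text{IC}_{\mathcal{W}}(p)\twoheadrightarrow\text{IC}_{\mathcal{X}}(p)$ restricted to $\mathcal{X}_{U}$ coincides with $\tfrac{1}{n}\,\text{tr}$, which at the level of cycle-class cohomology is the map $\tfrac{1}{n}(g_{U})_{*}:H^{2p}(\mathcal{W}_{U},\mathbb{Q}_{\ell}(p))\to H^{2p}(\mathcal{X}_{U},\mathbb{Q}_{\ell}(p))$, and the projection formula for the finite étale cover yields $(g_{U})_{*}g_{U}^{*}z^{p}_{U}=n\cdot z^{p}_{U}$. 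The main obstacle is the clean identification of the decomposition-theorem splitting with this trace splitting on $\mathcal{X}_{U}$; this reduces to pinning down the $\text{IC}_{\mathcal{X}}(p)$-isotypic component of $\mathbf{R}g_{*}\text{IC}_{\mathcal{W}}(p)$ using the semisimplicity of the local system $g_{U*}\mathbb{Q}_{\ell}(p)$ together with the full faithfulness of $j_{!*}$ on perverse sheaves.
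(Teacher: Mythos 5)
Your overall strategy -- pass to a de Jong alteration $g:\mathcal{W}\to\mathcal{X}$, take the cycle class of the closure of the pulled-back cycle on the regular scheme $\mathcal{W}$, and project onto the copy of $\text{IC}_{\mathcal{X}}(p)$ inside $\mathbf{R}g_{*}\text{IC}_{\mathcal{W}}(p)$ provided by the decomposition theorem, normalized by the trace splitting over the \'etale locus -- is exactly the mechanism behind the result the paper invokes (the paper's own ``proof'' is just a citation of Proposition 5.1 of R\"ossler--Szamuely). So the route is right, but there is one step that genuinely fails as written: ``after shrinking $U$, we may assume $g_{U}$ is finite \'etale of degree $n$.'' Shrinking $U$ only removes \emph{vertical} bad loci. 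The non-finite and ramification loci of the alteration are closed subsets of $\mathcal{X}$ of positive codimension, but they may well dominate $B$ (a horizontal branch divisor is the typical case), and then no open $U\subseteq B$ makes $g$ finite \'etale over the whole of $\mathcal{X}_{U}=\pi^{-1}(U)$. Since your verification that $\gamma$ restricts to $\text{cl}(z^{p}_{U})$ is run by identifying the retraction over all of $\mathcal{X}_{U}$ with $\tfrac1n\,\text{tr}$ and invoking $(g_{U})_{*}g_{U}^{*}=n$, the argument as stated has a hole.

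The hole is repairable without changing your framework, and the repair is worth recording. Normalize, as you do, the inclusion and retraction for the $\text{IC}_{\mathcal{X}}(p)$-summand so that over the dense open $V\subseteq\mathcal{X}$ where $g$ \emph{is} finite \'etale they are the adjunction unit and $\tfrac1n\,\text{tr}$ (this is where semisimplicity and the full faithfulness of intermediate extension are used, as you indicate). Then consider, over $\mathcal{X}_{U}$ (smooth and connected, but not necessarily contained in $V$), the composite
\begin{align*}
\mathbb{Q}_{\ell}[d+b]\xrightarrow{\ \text{unit}\ }\mathbf{R}g_{U*}\mathbb{Q}_{\ell}[d+b]\xrightarrow{\ r_{U}\ }\mathbb{Q}_{\ell}[d+b].
\end{align*}
This is an endomorphism of the shifted constant sheaf on a connected scheme, hence multiplication by a scalar, and it equals $1$ on the nonempty open $V\cap\mathcal{X}_{U}$; so it is the identity on all of $\mathcal{X}_{U}$. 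Since $g_{U}^{*}$ on cohomology is induced by the unit, and $\text{cl}(\widetilde{z}^{p})|_{\mathcal{W}_{U}}=g_{U}^{*}\text{cl}(z^{p}_{U})$ (compatibility of the cycle class with pullback along a morphism of smooth $k$-varieties), applying $H^{2p}$ gives $\gamma|_{\mathcal{X}_{U}}=\text{cl}(z^{p}_{U})$ directly; neither the trace over $\mathcal{X}_{U}$ nor the projection formula $(g_{U})_{*}g_{U}^{*}z^{p}_{U}=n\,z^{p}_{U}$ is needed. With this substitution (and taking $\mathcal{W}$ integral so that smoothness over the perfect field $k$ and the scalar argument apply), your proof goes through.
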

\begin{proof}
See Proposition 5.1.~in \cite{Ros-Sam}.

\end{proof}
Here $\text{IC}_{\mathcal{X}}=j_{\mathcal{X}!*}(\mathbb{Q}_{\ell}[b+d])$ is the intersection complex on $\mathcal{X}$, where $j_{\mathcal{X}}:\mathcal{X}_{U}\hookrightarrow\mathcal{X}$ is the inclusion. So using the lemma, we get a class in $H^{2p-b-d}_{\textnormal{\'et}}(\mathcal{X},\textnormal{IC}_{\mathcal{X}}(p))$, and by the same argument as before this gives rise to a class
\begin{align*}
\alpha^{p}_{B}\in H^{1-b}_{\text{\'et}}(B,{}^{\mathfrak{p}}\mathcal{H}^{2p-1-d}(\textbf{R}\pi_{*}\text{IC}_{\mathcal{X}}(p)))
\end{align*}
Next, noting that $D(\text{IC}_{\mathcal{X}})=\text{IC}_{\mathcal{X}}(b+d)$, we have the derived pairing (where $K_{\mathcal{X}}$ is the dualizing complex on $\mathcal{X}$)
\begin{align*}
\text{IC}_{\mathcal{X}}(p)\otimes^{\mathbf{L}}\text{IC}_{\mathcal{X}}(q)\rightarrow K_{\mathcal{X}}(1-b)
\end{align*}
which we can use to define a cohomological pairing $h_{B}$ in the same way as before
\begin{align*}
H^{1-b}_{\text{\'et}}(B,{}^{\mathfrak{p}}\mathcal{H}^{2p-1-d}(\textbf{R}\pi_{*}\text{IC}_{\mathcal{X}}(p)))\times H^{1-b}_{\text{\'et}}(B,{}^{\mathfrak{p}}\mathcal{H}^{2q-1-d}(\textbf{R}\pi_{*}\text{IC}_{\mathcal{X}}(q)))\rightarrow H^{2}_{\text{\'et}}(B,\mathbb{Q}_{\ell}(1))
\end{align*}
and then we can finally set $\langle \alpha^{p},\alpha^{q}\rangle_{B}:=h_{B}(\alpha^{p}_{B},\alpha^{q}_{B})$ just as before. All that follows applies equally to this pairing, but one must keep in mind that the intersection product is not well defined on $\mathcal{X}$ since it is not regular, so only the `cohomological realization' of the geometric pairing (that we define in the next section) is well defined.
\subsection{Cohomological vs Geometric}
The geometric pairing has a cohomological realization $h_{\mathcal{X}}$ which fits into the commutative diagram
\begin{center}
\begin{tikzcd}
  CH^{p}(\mathcal{X})\times CH^{q}(\mathcal{X}) \arrow[d] \arrow[r, "\cap"] 
  & CH^{1}(B) \arrow[d] \\
  H^{2p}_{\text{\'et}}(\mathcal{X},\mathbb{Q}_{\ell}(p))\times H^{2q}_{\text{\'et}}(\mathcal{X},\mathbb{Q}_{\ell}(q)) \arrow[r, "h_{\mathcal{X}}"']
  & H^{2}_{\text{\'et}}(B,\mathbb{Q}_{\ell}(1))
\end{tikzcd}
\end{center}
It is constructed from the derived pairing \eqref{eq:DerPair} by taking cohomology directly, and so one might hope that we can relate it to the cohomological pairing in the previous section.
\begin{theorem}
\label{th:Coh=Geo}
The cohomological pairing is equal to the geometric pairing
\begin{align*}
h_{B}(\alpha^{p}_{B},\alpha^{q}_{B})=h_{\mathcal{X}}(\textnormal{cl}(z^{p}_{B}),\textnormal{cl}(z^{q}_{B}))
\end{align*}where $\alpha^{p}_{B}$ is the image of $\textnormal{cl}(z^{p}_{B})$ under $P_{p}\rightarrow H^{1-b}_{\textnormal{\'et}}(B,{}^{\mathfrak{p}}\mathcal{H}^{2p-1+b}(\textnormal{\textbf{R}}\pi_{*}\mathbb{Q}_{\ell}(p)))$, and likewise for $q$.
\end{theorem}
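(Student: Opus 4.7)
The plan is to observe that both $h_\mathcal{X}$ and $h_B$ descend from the same derived pairing
\[
\phi: \mathbf{R}\pi_*\mathbb{Q}_\ell(p) \otimes^{\mathbf{L}} \mathbf{R}\pi_*\mathbb{Q}_\ell(q) \to \mathbb{Q}_\ell(1)[-2d]
\]
(cup product composed with trace, as in \eqref{eq:DerPair} up to shifting), differing only in how they extract a finite-dimensional pairing: $h_\mathcal{X}$ takes hypercohomology directly, while $h_B$ first passes through ${}^{\mathfrak{p}}\mathcal{H}^0$ of a shifted $\phi$. The approach is to split $\phi$ and both cycle classes along the decomposition theorem \eqref{eq:DecompThm}, and then combine the vanishing lemmas with a Poincar\'e-Verdier orthogonality to reduce both sides of the desired identity to the same single summand.

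Concretely, write $\mathcal{P}^p_k := {}^{\mathfrak{p}}\mathcal{H}^k(\mathbf{R}\pi_*\mathbb{Q}_\ell(p))$. The decomposition theorem gives $\mathbf{R}\pi_*\mathbb{Q}_\ell(p) \simeq \bigoplus_k \mathcal{P}^p_k[-k]$ and hence $H^{2p}(\mathcal{X}, \mathbb{Q}_\ell(p)) = \bigoplus_k H^{2p-k}(B, \mathcal{P}^p_k)$; I would decompose $\textnormal{cl}(z^p_B) = \sum_k a_k$ and $\textnormal{cl}(z^q_B) = \sum_{k'} b_{k'}$ accordingly. \cref{def:PervCyc} identifies $a_{2p-1+b} = \alpha^p_B$ and $b_{2q-1+b} = \alpha^q_B$, and Lemmas \ref{lem:Vnsh} and \ref{lem:ImCyc} force $a_k = 0$ for $k \geq 2p+b$ (symmetrically for $b_{k'}$). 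Expanding bilinearly,
\[
h_\mathcal{X}(\textnormal{cl}(z^p_B), \textnormal{cl}(z^q_B)) = \sum_{k, k'} \phi_{k,k'}(a_k, b_{k'})
\]
where $\phi_{k,k'}$ is the restriction of $\phi$ to $\mathcal{P}^p_k[-k] \otimes^{\mathbf{L}} \mathcal{P}^q_{k'}[-k']$.

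The crux is the orthogonality $\phi_{k,k'} = 0$ unless $k + k' = 2(d+b)$. Granting it, the bounds $k \leq 2p-1+b$ and $k' \leq 2q-1+b$ together with $(2p-1+b)+(2q-1+b) = 2(d+b)$ leave exactly one surviving summand, $\phi_{2p-1+b,\,2q-1+b}(\alpha^p_B, \alpha^q_B)$; a direct comparison of shifts identifies this summand (after taking $H^{1-b}$) with the pairing defining $h_B$, so the surviving term equals $h_B(\alpha^p_B, \alpha^q_B)$, as required.

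The main obstacle is the orthogonality claim. The plan is to deduce it from the observation that the trace map $\mathbf{R}\pi_*\mathbb{Q}_\ell(d+1) \to \mathbb{Q}_\ell(1)[-2d]$ factors through the top perverse cohomology sheaf ${}^{\mathfrak{p}}\mathcal{H}^{2d+b}(\mathbf{R}\pi_*\mathbb{Q}_\ell(d+1))$ --- since $\mathbb{Q}_\ell(1)[-2d]$ is the twisted shifted dualizing complex of $B$, perverse in precisely that degree --- so composing with the cup product forces the degree constraint once the shifts $[-k-k']$ are tracked. If that becomes unwieldy, a robust alternative is to choose the splitting in \eqref{eq:DecompThm} compatibly with the Verdier self-duality of $\mathbf{R}\pi_*\mathbb{Q}_\ell(p)$, which makes $\phi$ block-diagonal by construction.
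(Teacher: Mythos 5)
Your overall strategy is viable and, in mechanism, genuinely different from the paper's: where you split everything via a chosen decomposition \eqref{eq:DecompThm} and argue block-by-block, the paper (\cref{prop:Coh-Geom}) works with the perverse truncations, identifying $P_{p}$ with $H^{2p}_{\text{\'et}}(B,{}^{\mathfrak{p}}\tau_{\leq 2p-1+b}\mathbf{R}\pi_{*}\mathbb{Q}_{\ell}(p))$ (via \cref{lem:BndCoh}) and then showing in \cref{lem:DerSq} that the truncated pairing factors through the top perverse cohomology, using $\textnormal{Hom}(A,B)\simeq\textnormal{Hom}(H^{0}(A),H^{0}(B))$ for $A\in D^{\leq 0}$, $B\in D^{\geq 0}$; this avoids both the choice of splitting and any estimate on tensor products. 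Your bookkeeping of the components $a_{k},b_{k'}$, the vanishing $a_{k}=0$ for $k\geq 2p+b$ (\cref{lem:Vnsh}, \cref{lem:ImCyc}), and the identification of the surviving block with $h_{B}$ (since ${}^{\mathfrak{p}}\mathcal{H}^{0}$ of \eqref{eq:DerMap} is exactly its $(0,0)$-block once a fixed splitting is used consistently on both sides of the adjunction) are all fine.

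The gap is in the justification of your orthogonality claim. First, what you need is only $\phi_{k,k'}=0$ for $k+k'<2(d+b)$; for $k+k'>2(d+b)$ the blocks are higher Ext classes between perverse sheaves and need not vanish for an arbitrary splitting (harmless here only because those components of the cycle classes vanish). Second, your proposed derivation does not give the needed range: factoring the trace through ${}^{\mathfrak{p}}\mathcal{H}^{2d+b}(\mathbf{R}\pi_{*}\mathbb{Q}_{\ell}(d+1))$ reduces you to a map ${}^{\mathfrak{p}}\mathcal{H}^{2p-1+b+\ast}\otimes$-type source into a perverse target shifted by $[k+k'-2d-b]$, but the derived tensor product of two perverse sheaves on $B$ lies only in ordinary $D^{\leq 0}$, hence only in ${}^{\mathfrak{p}}D^{\leq b}(B)$, so this degree count kills only the blocks with $k+k'<2d$ and leaves the range $2d\leq k+k'<2(d+b)$ (nonempty for every $b\geq 1$) unaccounted for. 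The repair is to pass to the adjoint side before estimating: by tensor--hom adjunction and $R\mathcal{H}om(-,\mathbb{Q}_{\ell}(1)[-2d])=D(-)(1-b)[-2(d+b)]$, the block $\phi_{k,k'}$ is an element of $\textnormal{Hom}(\mathcal{P}^{p}_{k},D(\mathcal{P}^{q}_{k'})(1-b)[k+k'-2(d+b)])$, which vanishes whenever $k+k'<2(d+b)$ because both objects are perverse --- precisely the t-structure vanishing the paper invokes in \cref{lem:DerSq}. Your fallback of choosing a splitting compatible with Verdier self-duality would also close the gap, but the existence of such a splitting is itself a nontrivial input that you would need to prove or cite, so the adjunction argument is the cleaner completion.
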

\begin{prop}
\label{prop:Coh-Geom}
The following diagram commutes
\begin{center}
\begin{tikzcd}
  P_{p}\times P_{q} \arrow[dd] \arrow[dr, "h_{\mathcal{X}}"] 
  & ~ \\
  ~
  & H^{2}_{\textnormal{\'et}}(B,\mathbb{Q}_{\ell}(1)) \\  
  H^{1-b}_{\textnormal{\'et}}(B,{}^{\mathfrak{p}}\mathcal{H}^{2p-1+b}(\textnormal{\textbf{R}}\pi_{*}\mathbb{Q}_{\ell}(p)))\times H^{1-b}_{\textnormal{\'et}}(B,{}^{\mathfrak{p}}\mathcal{H}^{2q-1+b}(\textnormal{\textbf{R}}\pi_{*}\mathbb{Q}_{\ell}(q))) \arrow[ur, "h_{B}"']
  & ~
\end{tikzcd}
\end{center}
\end{prop}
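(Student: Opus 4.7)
My plan is to use the decomposition theorem \eqref{eq:DecompThm} to reduce both $h_{\mathcal{X}}$ and $h_B$ to the same computation on a single direct summand. Set $K_p := \mathbf{R}\pi_{*}\mathbb{Q}_{\ell}(p)[2p-1+b]$ and $K_q$ analogously, and fix splittings $K_p \simeq \bigoplus_{k} {}^{\mathfrak{p}}\mathcal{H}^{k}(K_p)[-k]$ supplied by the decomposition theorem. The cohomological dimension argument used in the proof of \cref{lem:Vnsh} shows that $H^{1-b-k}(B, {}^{\mathfrak{p}}\mathcal{H}^{k}(K_p)) = 0$ for all $k \geq 2$, so
\[
H^{1-b}(B, K_p) \;=\; H^{1-b}(B, {}^{\mathfrak{p}}\mathcal{H}^{0}(K_p)) \,\oplus\, H^{-b}(B, {}^{\mathfrak{p}}\mathcal{H}^{1}(K_p)).
\]
The second summand is exactly the group in which \cref{lem:ImCyc} forces $\textnormal{cl}(z^{p}_{B})$ to vanish, so $P_p$ coincides with the first summand and the left vertical map of the diagram becomes the inclusion of this summand; the analogous statement holds for $q$.

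Next I would analyse the adjoint of the derived pairing \eqref{eq:DerPair}, i.e. the morphism $\Phi: K_p \to D(K_q)$ in $D^{b}_{c}(B,\mathbb{Q}_{\ell})$. Using the chosen splittings, $\Phi$ is encoded as a matrix of maps $\Phi^{(k,l)}: {}^{\mathfrak{p}}\mathcal{H}^{k}(K_p) \to D({}^{\mathfrak{p}}\mathcal{H}^{l}(K_q))[k+l]$. Because $(-)[-k]$ of a perverse sheaf has trivial ${}^{\mathfrak{p}}\mathcal{H}^{0}$ unless $k = 0$, and the same is true on the dual side, applying ${}^{\mathfrak{p}}\mathcal{H}^{0}$ to $\Phi$ annihilates every component except $\Phi^{(0,0)}$, and identifies ${}^{\mathfrak{p}}\mathcal{H}^{0}(\Phi)$ with $\Phi^{(0,0)}: {}^{\mathfrak{p}}\mathcal{H}^{0}(K_p) \to D({}^{\mathfrak{p}}\mathcal{H}^{0}(K_q))$, which is precisely the perverse morphism defining $h_B$.

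The conclusion is then direct evaluation. For $(c^{p}, c^{q}) \in P_p \times P_q$, Step 1 places both classes in the $k = 0$ summand, so their cup product in $H^{2-2b}(B, K_p \otimes^{\mathbf{L}} K_q)$ is supported in ${}^{\mathfrak{p}}\mathcal{H}^{0}(K_p) \otimes^{\mathbf{L}} {}^{\mathfrak{p}}\mathcal{H}^{0}(K_q)$; on this summand every matrix entry of $\Phi$ other than $\Phi^{(0,0)}$ acts trivially. Thus both $h_{\mathcal{X}}(c^{p}, c^{q})$ and $h_{B}(\alpha^{p}_{B}, \alpha^{q}_{B})$ reduce to $\Phi^{(0,0)}_{*}(c^{p} \cup c^{q})$, yielding the commutativity. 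The main obstacle I anticipate is justifying that the canonical map $P_p \to H^{1-b}(B, {}^{\mathfrak{p}}\mathcal{H}^{2p-1+b}(\mathbf{R}\pi_{*}\mathbb{Q}_{\ell}(p)))$ used in \cref{def:PervCyc} agrees with the non-canonical direct-summand projection produced by the splitting; one has to argue using degeneration of the perverse Leray spectral sequence at $E_2$ (a consequence of the decomposition theorem) that all relevant edge maps are intrinsic, so that the identification of $\Phi^{(0,0)}$ with ${}^{\mathfrak{p}}\mathcal{H}^{0}(\Phi)$ is independent of the splitting chosen.
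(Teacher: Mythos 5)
Your Step 1 contains a genuine error that the rest of the argument leans on. With $K_p=\mathbf{R}\pi_{*}\mathbb{Q}_{\ell}(p)[2p-1+b]$, the cohomological-dimension argument of \cref{lem:Vnsh} only kills the summands with $k\geq 2$: it shows $H^{1-b-k}(B,{}^{\mathfrak{p}}\mathcal{H}^{k}(K_p))=0$ when $1-b-k<-b$. It says nothing about $k\leq -1$, where $1-b-k\geq 2-b$ and a perverse sheaf on a $b$-dimensional base can certainly have nonzero cohomology. So the claimed two-term decomposition of $H^{1-b}(B,K_p)\simeq H^{2p}_{\textnormal{\'et}}(\mathcal{X},\mathbb{Q}_{\ell}(p))$ is false in general: all perverse degrees $\leq 2p-1+b$ contribute. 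Consequently $P_p$ is \emph{not} the summand $H^{1-b}(B,{}^{\mathfrak{p}}\mathcal{H}^{0}(K_p))$; under a splitting it is the direct sum of all pieces with $k\leq 0$, and the map of \cref{def:PervCyc} is the projection onto the top piece. Your Step 3 ("Step 1 places both classes in the $k=0$ summand, so their cup product is supported in ${}^{\mathfrak{p}}\mathcal{H}^{0}(K_p)\otimes^{\mathbf{L}}{}^{\mathfrak{p}}\mathcal{H}^{0}(K_q)$") therefore does not follow as stated: classes in $P_p\times P_q$ generally have components in negative $k$, and you must rule out the cross-terms $\Phi^{(k,l)}$ with $k,l\leq 0$, $(k,l)\neq(0,0)$. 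This is fixable — such a component lives in $\textnormal{Hom}({}^{\mathfrak{p}}\mathcal{H}^{k}(K_p),D({}^{\mathfrak{p}}\mathcal{H}^{l}(K_q))[k+l])$, which vanishes for $k+l<0$ since there are no negative-degree maps between perverse sheaves — but that argument is absent from your proposal, and without it the proof has a hole exactly where the content of the proposition lies.

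You also correctly flag the second weak point, the compatibility of the non-canonical splitting with the canonical edge map of \cref{def:PervCyc}, but the appeal to $E_2$-degeneration is left vague. The paper's proof sidesteps both issues by never choosing a splitting: it identifies $P_p$ canonically with $H^{2p}_{\textnormal{\'et}}(B,{}^{\mathfrak{p}}\tau_{\leq 2p-1+b}\mathbf{R}\pi_{*}\mathbb{Q}_{\ell}(p))$ (via \cref{lem:BndCoh} and the truncation triangles), so the left vertical map is induced by ${}^{\mathfrak{p}}\tau_{\leq 2p-1+b}\mathbf{R}\pi_{*}\mathbb{Q}_{\ell}(p)\rightarrow{}^{\mathfrak{p}}\mathcal{H}^{2p-1+b}(\mathbf{R}\pi_{*}\mathbb{Q}_{\ell}(p))$, and then the whole comparison reduces (\cref{lem:DerSq}) to the t-structure fact $\textnormal{Hom}(A,B)\simeq\textnormal{Hom}({}^{\mathfrak{p}}\mathcal{H}^{0}(A),{}^{\mathfrak{p}}\mathcal{H}^{0}(B))$ for $A\in{}^{\mathfrak{p}}D^{\leq 0}$, $B\in{}^{\mathfrak{p}}D^{\geq 0}$ — which is precisely the canonical form of the negative-Ext vanishing your splitting argument would need. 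If you want to keep your route, add that vanishing statement explicitly and argue the splitting-independence; otherwise the truncation argument is both shorter and cleaner.
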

\begin{proof}
First note that
\begin{align*}
H^{2p}_{\text{\'et}}(\mathcal{X},\mathbb{Q}_{\ell}(p))\simeq H^{2p}_{\text{\'et}}(B,\mathbf{R}\pi_{*}\mathbb{Q}_{\ell}(p))\simeq H^{2p}_{\text{\'et}}(B,{}^{\mathfrak{p}}\tau_{\leq 2p+b}\mathbf{R}\pi_{*}\mathbb{Q}_{\ell}(p))
\end{align*}
where the second isomorphism is from the long exact sequence associated to the exact triangle
\begin{align*}
{}^{\mathfrak{p}}\tau_{\leq 2p+b}\mathbf{R}\pi_{*}\mathbb{Q}_{\ell}(p)\rightarrow \mathbf{R}\pi_{*}\mathbb{Q}_{\ell}(p)\rightarrow {}^{\mathfrak{p}}\tau_{\geq 2p+1+b}\mathbf{R}\pi_{*}\mathbb{Q}_{\ell}(p)\rightarrow {}^{\mathfrak{p}}\tau_{\leq 2p+b}\mathbf{R}\pi_{*}\mathbb{Q}_{\ell}(p)[1]
\end{align*}
Indeed, the following groups vanish
\begin{lemma}
\label{lem:BndCoh}
We have
\begin{align*}
H^{c}_{\textnormal{\'et}}(B,{}^{\mathfrak{p}}\tau_{\geq 2p+1+b}\mathbf{R}\pi_{*}\mathbb{Q}_{\ell}(p))=0
\end{align*}
for all $c<2p+1$.
\end{lemma}
\begin{proof}
We use the same argument as in \cref{lem:Vnsh}. Let $\mathcal{G}={}^{\mathfrak{p}}\tau_{\geq 2p+1+b}\mathbf{R}\pi_{*}\mathbb{Q}_{\ell}(p)$, then by applying Lemma 5.1.13.~of \cite{Kie} to $\mathcal{G}[2p+1+b]$ we have that
\begin{align*}
\mathcal{H}^{c}(\mathcal{G})=0
\end{align*}
for all $c<2p+1$. Using the spectral sequence
\begin{align*}
H^{s}_{\text{\'et}}(B,\mathcal{H}^{t}(\mathcal{G}))\Rightarrow H^{s+t}_{\text{\'et}}(B,\mathcal{G})
\end{align*}
since $H^{s}_{\text{\'et}}(B,\mathcal{H}^{t}(\mathcal{G}))=0$ for $s<0$ or $t<2p+1$, the lemma now follows.

\end{proof}
Next, using the exact triangle 
\begin{align*}
{}^{\mathfrak{p}}\tau_{\leq 2p-1+b}\mathbf{R}\pi_{*}\mathbb{Q}_{\ell}(p)\rightarrow {}^{\mathfrak{p}}\tau_{\leq 2p+b}\mathbf{R}\pi_{*}\mathbb{Q}_{\ell}(p)\rightarrow {}^{\mathfrak{p}}\mathcal{H}^{2p+b}(\textbf{R}\pi_{*}\mathbb{Q}_{\ell}(p))[-2p-b]\xrightarrow{[1]}
\end{align*}
and the long exact cohomology sequence associated to it, we have that the kernel of 
\begin{align*}
H^{2p}_{\text{\'et}}(\mathcal{X},\mathbb{Q}_{\ell}(p))\simeq H^{2p}_{\text{\'et}}(B,{}^{\mathfrak{p}}\tau_{\leq 2p+b}\mathbf{R}\pi_{*}\mathbb{Q}_{\ell}(p))\rightarrow H^{-b}_{\text{\'et}}(B,{}^{\mathfrak{p}}\mathcal{H}^{2p+b}(\textbf{R}\pi_{*}\mathbb{Q}_{\ell}(p)))
\end{align*}
is equal to $H^{2p}_{\text{\'et}}(B,{}^{\mathfrak{p}}\tau_{\leq 2p-1+b}\mathbf{R}\pi_{*}\mathbb{Q}_{\ell}(p))$.
\\

So the map $P_{p}\rightarrow H^{1-b}_{\text{\'et}}(B,{}^{\mathfrak{p}}\mathcal{H}^{2p-1+b}(\textbf{R}\pi_{*}\mathbb{Q}_{\ell}(p)))$ is induced from
\begin{align*}
{}^{\mathfrak{p}}\tau_{\leq 2p-1+b}\mathbf{R}\pi_{*}\mathbb{Q}_{\ell}(p)\rightarrow {}^{\mathfrak{p}}\mathcal{H}^{2p-1+b}(\textbf{R}\pi_{*}\mathbb{Q}_{\ell}(p))
\end{align*}

Let us momentarily denote $\mathcal{F}_{p}=\mathbf{R}\pi_{*}\mathbb{Q}_{\ell}(p)$ and likewise for $\mathcal{F}_{q}$.
\begin{lemma}
\label{lem:DerSq}
We have the following commutative diagram involving the two derived pairings
\begin{center}
\begin{tikzcd}
  {}^{\mathfrak{p}}\tau_{\leq 2p-1+b}\mathcal{F}_{p}\otimes^{\mathbf{L}} {}^{\mathfrak{p}}\tau_{\leq 2q-1+b}\mathcal{F}_{q} \arrow[d] \arrow[r] 
  & \mathcal{F}_{p}\otimes^{\mathbf{L}} \mathcal{F}_{q} \arrow[d] \\
  {}^{\mathfrak{p}}\mathcal{H}^{2p-1+b}(\mathcal{F}_{p})[-2p+1-b]\otimes^{\mathbf{L}} {}^{\mathfrak{p}}\mathcal{H}^{2q-1+b}(\mathcal{F}_{q})[-2q+1-b] \arrow[r]
  & \mathbb{Q}_{\ell}(1)[-2d]
\end{tikzcd}
\end{center}
\end{lemma}
\begin{proof}
We want to show two pairings
\begin{align*}
{}^{\mathfrak{p}}\tau_{\leq 2p-1+b}\mathcal{F}_{p}\otimes^{\mathbf{L}} {}^{\mathfrak{p}}\tau_{\leq 2q-1+b}\mathcal{F}_{q}\rightarrow \mathbb{Q}_{\ell}(1)[-2d]
\end{align*}
are equal. So let us show the maps
\begin{align*}
{}^{\mathfrak{p}}\tau_{\leq 0}\widetilde{\mathcal{F}_{p}}\rightarrow D({}^{\mathfrak{p}}\tau_{\leq 0}\widetilde{\mathcal{F}_{q}})
\end{align*}
which they correspond to are equal, where $\widetilde{\mathcal{F}_{p}}=\mathcal{F}_{p}[2p-1+b]$ and likewise for $\widetilde{\mathcal{F}_{q}}$.\\

This means showing the following diagram commutes
\begin{center}
\begin{tikzcd}
  {}^{\mathfrak{p}}\mathcal{H}^{0}\widetilde{\mathcal{F}_{p}} \arrow[r]
  & D({}^{\mathfrak{p}}\mathcal{H}^{0}\widetilde{\mathcal{F}_{q}}) \arrow[d] \\
  {}^{\mathfrak{p}}\tau_{\leq 0}\widetilde{\mathcal{F}_{p}} \arrow[r] \arrow[d] \arrow[u]
  & D({}^{\mathfrak{p}}\tau_{\leq 0}\widetilde{\mathcal{F}_{q}}) \\
  \widetilde{\mathcal{F}_{p}} \arrow[r]
  & D(\widetilde{\mathcal{F}_{q}}) \arrow[u]
\end{tikzcd}
\end{center}
Note that ${}^{\mathfrak{p}}\tau_{\leq 0}\widetilde{\mathcal{F}_{p}}\in 
{}^{\mathfrak{p}}D^{\leq 0}$ and $D({}^{\mathfrak{p}}\tau_{\leq 0}\widetilde{\mathcal{F}_{q}})={}^{\mathfrak{p}}\tau_{\geq 0}D(\widetilde{\mathcal{F}_{q}})\in {}^{\mathfrak{p}}D^{\geq 0}$, so the commutativity follows from the standard result for t-structures that
\begin{align*}
\textnormal{Hom}(A,B)\simeq\textnormal{Hom}(H^{0}(A),H^{0}(B))
\end{align*}
for $A\in D^{\leq 0}$ and $B\in D^{\geq 0}$.

\end{proof}
Taking cohomology of the commutative square in the statement of the lemma, the proposition now follows. 

\end{proof}
\begin{proof}[Proof. (of \cref{th:Coh=Geo})]
Using \cref{prop:Coh-Geom} we just need to observe that the image of $\textnormal{cl}(z^{p}_{B})$ under
\begin{align*}
P_{p}\rightarrow H^{1-b}_{\textnormal{\'et}}(B,{}^{\mathfrak{p}}\mathcal{H}^{2p-1+b}(\textbf{R}\pi_{*}\mathbb{Q}_{\ell}(p)))
\end{align*}
is equal to $\alpha^{p}_{B}$ by definition, and likewise for $q$.

\end{proof}
\subsection{Comparing Cohomological Pairings}
\label{sec:CohPair}
Let us now investigate how the two cohomological pairings $h_{B}$ and $h_{U}$ compare.
\begin{theorem}
\label{th:CompCoh}
For any $\alpha^{p}\in CH^{p}_{\textnormal{hom}}(X)_{\mathbb{Q}}$, $\alpha^{q}\in CH^{q}_{\textnormal{hom}}(X)_{\mathbb{Q}}$ we have
\begin{align*}
\langle\alpha^{p},\alpha^{q}\rangle_{B}=\langle\alpha^{p},\alpha^{q}\rangle_{U}+h_{B}(\widehat{\alpha^{p}_{B}},\widehat{\alpha^{q}_{B}})
\end{align*}
Moreover, if we can find a representative $z^{p}_{B}$ such that the image of $\textnormal{cl}(z^{p}_{B})$ inside $H^{0}_{\textnormal{\'et}}(B,\textnormal{\textbf{R}}^{2p}\pi_{*}\mathbb{Q}_{\ell}(p)))$ is zero, then for the corresponding $\alpha^{p}_{B}$ we have $\widehat{\alpha^{p}_{B}}=0$, and consequently
\begin{align*}
\langle\alpha^{p},\alpha^{q}\rangle_{B}=\langle\alpha^{p},\alpha^{q}\rangle_{U}
\end{align*}
\end{theorem}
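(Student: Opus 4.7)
The plan is to use the decomposition theorem \eqref{eq:DecompThm} to split both the target of $\alpha^p_B$ and the pairing $h_B$ into a $j_{!*}$-block matching R\"ossler--Szamuely's construction and a block supported on $Z$ that captures the defect. Applying \eqref{eq:DecompThm} in perverse degree $2p-1+b$ gives
\begin{align*}
{}^{\mathfrak{p}}\mathcal{H}^{2p-1+b}(\mathbf{R}\pi_*\mathbb{Q}_\ell(p)) \simeq j_{!*}(\mathbf{R}^{2p-1}\pi_{U*}\mathbb{Q}_\ell(p)[b]) \oplus C_{2p-1},
\end{align*}
and hence a direct sum decomposition of $H^{1-b}_{\text{\'et}}(B, {}^{\mathfrak{p}}\mathcal{H}^{2p-1+b}(\mathbf{R}\pi_*\mathbb{Q}_\ell(p)))$. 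Writing $\alpha^p_B = \alpha^p_U \oplus \widehat{\alpha^p_B}$ under this splitting, the first step is to check that the $j_{!*}$-component really is R\"ossler--Szamuely's class. This should follow by chasing the commutative square \eqref{eq:SpecSeq}: both classes are obtained as the projection of $\text{cl}(z^p_B)$ (respectively $\text{cl}(z^p_U) = \text{cl}(z^p_B)|_U$) into the second graded piece of the respective spectral sequence filtrations, and the decomposition theorem intertwines them.

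Next I need the pairing $h_B$ to be block-diagonal with respect to these splittings for $p$ and $q$. The map \eqref{eq:DerMap} decomposes along \eqref{eq:DecompThm}, and the cross components produce morphisms of perverse sheaves
\begin{align*}
j_{!*}(\mathbf{R}^{2p-1}\pi_{U*}\mathbb{Q}_\ell(p)[b]) \longrightarrow D(C_{2q-1})
\end{align*}
from a simple perverse extension to a perverse sheaf supported on $Z$ (since Verdier duality preserves support on $Z$). These cross terms vanish by the standard property $\text{Hom}_{\text{Perv}(B)}(j_{!*}L, i_*M) = 0$ of intermediate extensions. The surviving $j_{!*}$-to-$j_{!*}$ block is identified with $h_U$ using that $j_{!*}$ commutes with Verdier duality and that the derived pairing restricts compatibly from $B$ to $U$. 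The $C$-to-$C$ block is by definition $h_B(\widehat{\alpha^p_B}, \widehat{\alpha^q_B})$, yielding the main formula.

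For the moreover, the decomposition theorem also gives
\begin{align*}
\mathbf{R}^{2p}\pi_*\mathbb{Q}_\ell(p) \simeq \bigoplus_{k \ge 0} \mathcal{H}^{2p-k-b}(A_k), \qquad A_k := j_{!*}(\mathbf{R}^k\pi_{U*}\mathbb{Q}_\ell(p)[b]) \oplus C_k,
\end{align*}
so $H^0_{\text{\'et}}(B, \mathcal{H}^{1-b}(C_{2p-1}))$ appears as a direct summand of $H^0_{\text{\'et}}(B, \mathbf{R}^{2p}\pi_*\mathbb{Q}_\ell(p))$. Since $C_{2p-1}$ is perverse and supported on $Z$ with $\dim Z \le b-1$, its sheaf cohomology $\mathcal{H}^t(C_{2p-1})$ is concentrated in $t \in [1-b, 0]$, and a short dimension-count in the hypercohomology spectral sequence shows that $(s, t) = (0, 1-b)$ is the only contributor to $H^{1-b}_{\text{\'et}}(B, C_{2p-1})$ on that anti-diagonal (outgoing differentials vanish because their targets sit in cohomological degrees $t < 1-b$). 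Hence $H^{1-b}_{\text{\'et}}(B, C_{2p-1}) \simeq H^0_{\text{\'et}}(B, \mathcal{H}^{1-b}(C_{2p-1}))$ canonically, and by compatibility of the decomposition theorem with the Leray edge map for $\pi$ on $B$, the class $\widehat{\alpha^p_B}$ is precisely the image of $\text{cl}(z^p_B)$ in this summand of $H^0_{\text{\'et}}(B, \mathbf{R}^{2p}\pi_*\mathbb{Q}_\ell(p))$. Vanishing of the latter therefore forces $\widehat{\alpha^p_B} = 0$, and the defect term disappears.

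The principal obstacles will be the block-diagonality of $h_B$ --- requiring careful tracking of how \eqref{eq:DecompThm} interacts with the derived pairing and Verdier duality after taking perverse $\mathcal{H}^0$ --- together with the compatibility invoked in the moreover, which depends on matching the Leray edge map with the perverse projection defining $\widehat{\alpha^p_B}$ via the splitting furnished by the decomposition theorem.
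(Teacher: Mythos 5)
Your argument is correct, and it splits naturally into two halves. The first half --- decomposing ${}^{\mathfrak{p}}\mathcal{H}^{2p-1+b}(\mathbf{R}\pi_{*}\mathbb{Q}_{\ell}(p))$ into the $j_{!*}$-block and $C_{2p-1}$, identifying the $j_{!*}$-component of $\alpha^{p}_{B}$ with $\alpha^{p}_{U}$ via the comparison of spectral sequences \eqref{eq:SpecSeq}, killing the cross terms by $\textnormal{Hom}(j_{!*}A,i_{*}B)=0=\textnormal{Hom}(i_{*}B,j_{!*}A)$, and recognizing the $j_{!*}$-to-$j_{!*}$ block of $h_{B}$ as $h_{U}$ by restricting to $U$ --- is essentially the paper's proof (\cref{prop:CommTri} together with \cref{lem:Orthog}); the one ingredient you leave implicit is the continuation principle, i.e.\ that a map between intermediate extensions is determined by its restriction to $U$, which is what actually pins the $j_{!*}$-block down as $h_{U}$, and note that $j_{!*}\mathbf{R}^{2p-1}\pi_{U*}\mathbb{Q}_{\ell}(p)[b]$ need not be simple (only a sum of intermediate extensions with no $Z$-supported summands), though the Hom-vanishing you invoke holds in that generality. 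Where you genuinely diverge is the ``moreover'': the paper (\cref{th:BeilConj}) restricts the decomposition to $Z$, uses affineness of $j$ to see that $i^{*}j_{!*}(\cdot)[-1]$ is perverse, and then builds the map \eqref{eq:PervMap} out of truncation triangles and the vanishing lemmas \cref{lem:BoundCoh} and \cref{lem:BoundCohZ}, finally chasing the diagram \eqref{eq:CommTri}. You instead stay on $B$: since $C_{2p-1}=i_{*}\widehat{C}_{2p-1}$ is perverse with support of dimension $\leq b-1$, its cohomology sheaves live in $[1-b,0]$, so the hypercohomology edge map $H^{1-b}_{\textnormal{\'et}}(B,C_{2p-1})\rightarrow H^{0}_{\textnormal{\'et}}(B,\mathcal{H}^{1-b}(C_{2p-1}))$ is injective (indeed an isomorphism), and functoriality of the Leray edge map applied to the chosen decomposition \eqref{eq:DecompThm} identifies the $C$-component of the edge image of $\textnormal{cl}(z^{p}_{B})$ with the image of $\widehat{\alpha^{p}_{B}}$; the hypothesis then kills $\widehat{\alpha^{p}_{B}}$ directly. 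This is shorter than the paper's route, avoids the affineness-of-$j$ input and the truncation gymnastics on $Z$, at the price of having to check (as you flag) that the fixed, non-canonical splitting is simultaneously compatible with the perverse projection defining $\widehat{\alpha^{p}_{B}}$ and with the ordinary edge map --- but that is pure functoriality of the hypercohomology spectral sequence in the coefficient complex, so the argument goes through.
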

See \eqref{eq:Decomp} for the meaning of $\widehat{\alpha^{p}_{B}}$. Combining \cref{th:CompCoh} with \cref{th:Coh=Geo} gives the following corollary.
\begin{corollary}
\label{cor:Geo=Perv}
For any $\alpha^{p}\in CH^{p}_{\textnormal{hom}}(X)_{\mathbb{Q}}$, $\alpha^{q}\in CH^{q}_{\textnormal{hom}}(X)_{\mathbb{Q}}$ we have
\begin{align*}
\textnormal{cl}_{B}\langle\alpha^{p},\alpha^{q}\rangle_{\mathcal{X}}=\langle\alpha^{p},\alpha^{q}\rangle_{U}+h_{B}(\widehat{\alpha^{p}_{B}},\widehat{\alpha^{q}_{B}})
\end{align*}
\end{corollary}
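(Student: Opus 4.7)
The plan is to derive \cref{cor:Geo=Perv} purely by chaining \cref{th:Coh=Geo} and \cref{th:CompCoh}, together with the one defining property of the cohomological realization $h_{\mathcal{X}}$ of the geometric pairing. Since both input theorems are already established, the work is essentially bookkeeping, and I do not expect any serious obstacle.

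First I would unwind definitions. By construction $\langle\alpha^{p},\alpha^{q}\rangle_{B}:=h_{B}(\alpha^{p}_{B},\alpha^{q}_{B})$, and \cref{th:Coh=Geo} identifies this with $h_{\mathcal{X}}(\textnormal{cl}(z^{p}_{B}),\textnormal{cl}(z^{q}_{B}))$ for any cycle representatives $z^{p}_{B},z^{q}_{B}$ extending chosen representatives $z^{p},z^{q}$ of $\alpha^{p},\alpha^{q}$ to $\mathcal{X}$.

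Next I would invoke the commutative square displayed at the start of \cref{sec:CohPair} that defines $h_{\mathcal{X}}$: it says exactly that the cycle class map $\textnormal{cl}_{B}$ intertwines the composite $\pi_{*}\circ \cap$ with $h_{\mathcal{X}}(\textnormal{cl}(-),\textnormal{cl}(-))$. Applied to the pair $(z^{p}_{B},z^{q}_{B})$, this yields $h_{\mathcal{X}}(\textnormal{cl}(z^{p}_{B}),\textnormal{cl}(z^{q}_{B}))=\textnormal{cl}_{B}\bigl(\pi_{*}(z^{p}_{B}\cap z^{q}_{B})\bigr)=\textnormal{cl}_{B}\langle\alpha^{p},\alpha^{q}\rangle_{\mathcal{X}}$, where the last equality is just the definition of $\langle -,-\rangle_{\mathcal{X}}$. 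Concatenating with the previous step gives $\langle\alpha^{p},\alpha^{q}\rangle_{B}=\textnormal{cl}_{B}\langle\alpha^{p},\alpha^{q}\rangle_{\mathcal{X}}$.

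Finally I would substitute this identity into the formula supplied by \cref{th:CompCoh}, namely $\langle\alpha^{p},\alpha^{q}\rangle_{B}=\langle\alpha^{p},\alpha^{q}\rangle_{U}+h_{B}(\widehat{\alpha^{p}_{B}},\widehat{\alpha^{q}_{B}})$, to obtain exactly the asserted corollary. The only point requiring a moment of care is that the same choice of extensions $z^{p}_{B},z^{q}_{B}$ be used when invoking \cref{th:Coh=Geo} and \cref{th:CompCoh}; this is automatic, since both theorems are formulated in terms of classes $\alpha^{p}_{B},\alpha^{q}_{B}$ attached to a fixed pair of representatives.
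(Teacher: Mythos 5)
Your proposal is correct and matches the paper's own proof, which simply combines \cref{th:Coh=Geo} and \cref{th:CompCoh} (with the commutative square defining $h_{\mathcal{X}}$ translating $h_{\mathcal{X}}(\textnormal{cl}(z^{p}_{B}),\textnormal{cl}(z^{q}_{B}))$ into $\textnormal{cl}_{B}\langle\alpha^{p},\alpha^{q}\rangle_{\mathcal{X}}$, exactly as you do), together with your correct observation that a fixed choice of extensions $z^{p}_{B},z^{q}_{B}$ underlies both theorems. The only slip is cosmetic: the square defining $h_{\mathcal{X}}$ appears at the start of the subsection ``Cohomological vs Geometric,'' not of \cref{sec:CohPair}.
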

Now to prove \cref{th:CompCoh}, we start with the decomposition theorem \eqref{eq:DecompThm}, which gives us a map
\begin{align*}
j_{!*}\mathbf{R}^{2p-1}\pi_{U*}\mathbb{Q}_{\ell}(p)[b]\rightarrow {}^{\mathfrak{p}}\mathcal{H}^{2p-1+b}(\textbf{R}\pi_{*}\mathbb{Q}_{\ell}(p))
\end{align*}
\begin{prop}
\label{prop:CommTri}
The following triangle commutes
\begin{center}
\begin{tikzcd}
  j_{!*}\mathbf{R}^{2p-1}\pi_{U*}\mathbb{Q}_{\ell}(p)[b]\otimes^{\mathbf{L}} j_{!*}\mathbf{R}^{2q-1}\pi_{U*}\mathbb{Q}_{\ell}(q)[b] \arrow[dd] \arrow[dr, "h_{U}"] 
  & ~ \\
  ~
  & \mathbb{Q}_{\ell}(1)[2b] \\  
  {}^{\mathfrak{p}}\mathcal{H}^{2p-1+b}(\textnormal{\textbf{R}}\pi_{*}\mathbb{Q}_{\ell}(p))\otimes^{\mathbf{L}} {}^{\mathfrak{p}}\mathcal{H}^{2q-1+b}(\textnormal{\textbf{R}}\pi_{*}\mathbb{Q}_{\ell}(q)) \arrow[ur, "h_{B}"']
  & ~
\end{tikzcd}
\end{center}
\end{prop}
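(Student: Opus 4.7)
The plan is to reformulate both pairings via tensor-hom adjunction, and then use the full faithfulness of $j_{!*}$ on the perverse heart to reduce commutativity to an identification of two morphisms on $U$.

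Write $\mathcal{F}_{p}=\mathbf{R}^{2p-1}\pi_{U*}\mathbb{Q}_{\ell}(p)[b]$ and $\mathcal{F}_{q}=\mathbf{R}^{2q-1}\pi_{U*}\mathbb{Q}_{\ell}(q)[b]$. Under tensor-hom adjunction, $h_{U}$ corresponds to a morphism $\phi_{U}:j_{!*}\mathcal{F}_{p}\to D(j_{!*}\mathcal{F}_{q})$; by the very construction of $h_{U}$ this equals $j_{!*}$ applied to the Poincar\'e duality map $\mathcal{F}_{p}\to D(\mathcal{F}_{q})$ on $U$, using the natural isomorphism $j_{!*}\circ D\simeq D\circ j_{!*}$. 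Denote by $\iota_{p}:j_{!*}\mathcal{F}_{p}\hookrightarrow {}^{\mathfrak{p}}\mathcal{H}^{2p-1+b}(\mathbf{R}\pi_{*}\mathbb{Q}_{\ell}(p))$ and $\iota_{q}$ the split inclusions provided by the decomposition theorem \eqref{eq:DecompThm}, and by $\phi_{B}$ the perverse-cohomology-level map underlying $h_{B}$. Then the lower composition of the triangle corresponds under tensor-hom to $\psi:=D(\iota_{q})\circ\phi_{B}\circ\iota_{p}:j_{!*}\mathcal{F}_{p}\to D(j_{!*}\mathcal{F}_{q})$, and the proposition reduces to the claim that $\phi_{U}=\psi$.

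Both $\phi_{U}$ and $\psi$ are morphisms in $\text{Perv}(B)$ (the target $D(j_{!*}\mathcal{F}_{q})\simeq j_{!*}D(\mathcal{F}_{q})$ being perverse). By the full faithfulness of $j_{!*}$ on the perverse heart, the restriction map
\[
\text{Hom}_{\text{Perv}(B)}(j_{!*}\mathcal{F}_{p},j_{!*}D(\mathcal{F}_{q}))\xrightarrow{\sim}\text{Hom}_{\text{Perv}(U)}(\mathcal{F}_{p},D(\mathcal{F}_{q}))
\]
is an isomorphism, so it suffices to check $\phi_{U}|_{U}=\psi|_{U}$. On $U$, since the $C_{k}$ are supported on $Z$, the decomposition \eqref{eq:DecompThm} collapses to $\mathbf{R}\pi_{U*}\mathbb{Q}_{\ell}(p)\simeq\bigoplus_{a}\mathbf{R}^{a}\pi_{U*}\mathbb{Q}_{\ell}(p)[-a]$, and analogously for $q$. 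Hence $\iota_{p}|_{U}$ and $\iota_{q}|_{U}$ are identifications of direct summands, and $\psi|_{U}$ reduces to $\phi_{B}|_{U}$.

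It remains to identify $\phi_{B}|_{U}$ with $\phi_{U}|_{U}$. On $U$ the shifted summand $\mathbf{R}^{a}\pi_{U*}\mathbb{Q}_{\ell}(p)[2p-1+b-a]$ lies in perverse degree $a-2p+1$, so ${}^{\mathfrak{p}}\mathcal{H}^{0}$ of the shifted complex retains only the $a=2p-1$ summand on the source (and dually $a'=2q-1$ on the target). Combined with the trace requirement that only summands with $a+a'=2d$ pair nontrivially into $\mathbb{Q}_{\ell}(1)$, this isolates precisely the $(2p-1,2q-1)$ component of the derived pairing, which by the definition of $h_{U}$ is exactly $\phi_{U}|_{U}$. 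The main obstacle will be this last step: verifying that the pairing extracted by ${}^{\mathfrak{p}}\mathcal{H}^{0}$ agrees with the one defining $h_{U}$ on the nose, rather than up to a sign or Tate twist, requires carefully tracing the cup-product-and-trace construction together with the identification $D\circ j_{!*}\simeq j_{!*}\circ D$. Once this compatibility is pinned down, the reduction above gives $\phi_{U}=\psi$ and the triangle commutes.
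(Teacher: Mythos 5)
Your argument is correct and is essentially the paper's own proof: the reduction via tensor--hom adjunction together with the full faithfulness of $j_{!*}$ on perverse sheaves is exactly the appeal to the continuation principle (Corollary III.5.11 of Kiehl--Weissauer) made in the paper, and your identification on $U$ of the ${}^{\mathfrak{p}}\mathcal{H}^{0}$-component of the derived pairing with the duality map defining $h_{U}$ is the same step the paper asserts when it says that pulling \eqref{eq:DerMap} back to $U$ gives the duality map. The sign/twist compatibility you flag at the end is likewise left implicit in the paper's proof, so your write-up matches it in both structure and level of detail.
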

\begin{proof}
We simply go through the construction of $h_{B}$, using the decomposition theorem to decompose
\begin{align*}
\mathbf{R}\pi_{*}\mathbb{Q}_{\ell}(p)=j_{!*}(\mathbf{R}^{2p-1}\pi_{U*}\mathbb{Q}_{\ell}(p)[b])[-2p+1-b]\oplus C
\end{align*}
and likewise for $q$, where $C$ denotes the spare terms in the decomposition theorem.
\\

Indeed, to construct $h_{B}$ one starts with the map \eqref{eq:DerMap}
\begin{align*}
j_{!*}\mathbf{R}^{2p-1}\pi_{U*}\mathbb{Q}_{\ell}(p)[b]\oplus C=\mathbf{R}\pi_{*}\mathbb{Q}_{\ell}(p)[2p-1+b]&\rightarrow D(\mathbf{R}\pi_{*}\mathbb{Q}_{\ell}(q-1+b)[2q-1+b])\\
&=D(j_{!*}\mathbf{R}^{2q-1}\pi_{U*}\mathbb{Q}_{\ell}(q)[b]\oplus C')\\
&=j_{!*}D(\mathbf{R}^{2q-1}\pi_{U*}\mathbb{Q}_{\ell}(q)[b])\oplus D(C')
\end{align*}
which contains a map $j_{!*}\mathbf{R}^{2p-1}\pi_{U*}\mathbb{Q}_{\ell}(p)[b]\rightarrow j_{!*}D(\mathbf{R}^{2q-1}\pi_{U*}\mathbb{Q}_{\ell}(q)[b])$ that we claim is the one corresponding to $h_{U}$.
\\

To see that this is true, note that if we pullback this map to $U$ we get the duality map $\mathbf{R}^{2p-1}\pi_{U*}\mathbb{Q}_{\ell}(p)[b]\rightarrow D(\mathbf{R}^{2q-1}\pi_{U*}\mathbb{Q}_{\ell}(q)[b])$, from which $h_{U}$ is defined by applying $j_{!*}$, so now we just need to appeal to the continuation principle, see Corollary III.5.11.~of \cite{Kie}.
\\

So when we apply ${}^{\mathfrak{p}}\mathcal{H}^{0}$ to \eqref{eq:DerMap} to get (the map corresponding to) the pairing $h_{B}$, as ${}^{\mathfrak{p}}\mathcal{H}^{0}$ does nothing on $j_{!*}\mathbf{R}^{2p-1}\pi_{U*}\mathbb{Q}_{\ell}(p)[b]$ and $j_{!*}\mathbf{R}^{2q-1}\pi_{U*}\mathbb{Q}_{\ell}(q)[b]$, we see that the pairing $h_{B}$ also contains $h_{U}$, and the proposition follows.

\end{proof}

When we pass to cohomology we get the commutative triangle
\begin{center}
\begin{tikzcd}
  H^{1-b}_{\text{\'et}}(B,j_{!*}\mathbf{R}^{2p-1}\pi_{U*}\mathbb{Q}_{\ell}(p)[b])\times H^{1-b}_{\text{\'et}}(B,j_{!*}\mathbf{R}^{2q-1}\pi_{U*}\mathbb{Q}_{\ell}(q)[b]) \arrow[dd] \arrow[dr, "h_{U}"] 
  & ~ \\
  ~
  & H^{2}_{\text{\'et}}(B,\mathbb{Q}_{\ell}(1)) \\  
  H^{1-b}_{\text{\'et}}(B,{}^{\mathfrak{p}}\mathcal{H}^{2p-1+b}(\textbf{R}\pi_{*}\mathbb{Q}_{\ell}(p)))\times H^{1-b}_{\text{\'et}}(B,{}^{\mathfrak{p}}\mathcal{H}^{2q-1+b}(\textbf{R}\pi_{*}\mathbb{Q}_{\ell}(q))) \arrow[ur, "h_{B}"']
  & ~
\end{tikzcd}
\end{center}
and what remains is to understand the relation between $\alpha^{p}_{U}$ and $\alpha^{p}_{B}$.
\\

To do this, first note that we have the commutative triangle
\begin{center}
\begin{tikzcd}
  j_{!*}\mathbf{R}^{2p-1}\pi_{U*}\mathbb{Q}_{\ell}(p)[b] \arrow[d] \arrow[r] 
  & \mathbf{R}j_{*}\mathbf{R}^{2p-1}\pi_{U*}\mathbb{Q}_{\ell}(p)[b] \\
  {}^{\mathfrak{p}}\mathcal{H}^{2p-1+b}(\textbf{R}\pi_{*}\mathbb{Q}_{\ell}(p)) \arrow[ur]
  & ~
\end{tikzcd}
\end{center}
which when we pass to cohomology gives
\begin{center}
\begin{tikzcd}
  H^{1-b}_{\text{\'et}}(B,j_{!*}\mathbf{R}^{2p-1}\pi_{U*}\mathbb{Q}_{\ell}(p)[b]) \arrow[d, "\iota"'] \arrow[hookrightarrow]{r} 
  & H^{1}_{\text{\'et}}(U,\mathbf{R}^{2p-1}\pi_{U*}\mathbb{Q}_{\ell}(p)) \\
  H^{1-b}_{\text{\'et}}(B,{}^{\mathfrak{p}}\mathcal{H}^{2p-1+b}(\textbf{R}\pi_{*}\mathbb{Q}_{\ell}(p))) \arrow[ur]
  & ~
\end{tikzcd}
\end{center}
Note that the kernel of the map
\begin{align*}
H^{1-b}_{\text{\'et}}(B,j_{!*}\mathbf{R}^{2p-1}\pi_{U*}\mathbb{Q}_{\ell}(p)[b])\oplus H^{1-b}_{\text{\'et}}(B,C_{2p-1})&\simeq H^{1-b}_{\text{\'et}}(B,{}^{\mathfrak{p}}\mathcal{H}^{2p-1+b}(\textbf{R}\pi_{*}\mathbb{Q}_{\ell}(p)))\\
&\rightarrow H^{1}_{\text{\'et}}(U,\mathbf{R}^{2p-1}\pi_{U*}\mathbb{Q}_{\ell}(p))
\end{align*}
is exactly $H^{1-b}_{\text{\'et}}(B,C_{2p-1})$, because $C_{2p-1}$ is supported on $Z$.
\\

Now consider the commutative diagram
\begin{center}
\begin{tikzcd}
  P_{p} \arrow[d] \arrow[r] 
  & H^{1-b}_{\text{\'et}}(B,{}^{\mathfrak{p}}\mathcal{H}^{2p-1+b}(\textbf{R}\pi_{*}\mathbb{Q}_{\ell}(p))) \arrow[d] \\ 
   P^{'}_{p} \arrow[r]
  & H^{1}_{\text{\'et}}(U,\mathbf{R}^{2p-1}\pi_{U*}\mathbb{Q}_{\ell}(p))
\end{tikzcd}
\end{center}
where $P^{'}_{p}=\text{Ker}(H^{2p}_{\text{\'et}}(\mathcal{X}_{U},\mathbb{Q}_{\ell}(p))\rightarrow H^{1}_{\text{\'et}}(U,\mathbf{R}^{2p-1}\pi_{U*}\mathbb{Q}_{\ell}(p)))$. The commutativity of the diagram follows from the map \eqref{eq:SpecSeq} of spectral sequences
\\

If we trace the image of $\text{cl}(z^{p}_{B})$ anticlockwise we get $\text{cl}(z^{p}_{U})$ and then $\alpha^{p}_{U}$. On the other hand, if we go clockwise we get $\alpha^{p}_{B}$ and then whatever it maps to, which therefore must be $\alpha^{p}_{U}$.
\\

So if we write
\begin{equation}
\label{eq:Decomp}
\alpha^{p}_{B}=\widetilde{\alpha^{p}_{B}}+\widehat{\alpha^{p}_{B}}\in H^{1-b}_{\text{\'et}}(B,j_{!*}\mathbf{R}^{2p-1}\pi_{U*}\mathbb{Q}_{\ell}(p)[b])\oplus H^{1-b}_{\text{\'et}}(B,C_{2p-1})
\end{equation}
then we must have $\widetilde{\alpha^{p}_{B}}=\iota(\alpha^{p}_{U})$.
\\

We have then that
\begin{align*}
\langle \alpha^{p},\alpha^{q}\rangle_{B}&=h_{B}(\widetilde{\alpha^{p}_{B}},\widetilde{\alpha^{q}_{B}})+h_{B}(\widetilde{\alpha^{p}_{B}},\widehat{\alpha^{q}_{B}})+h_{B}(\widehat{\alpha^{p}_{B}},\widetilde{\alpha^{q}_{B}})+h_{B}(\widehat{\alpha^{p}_{B}},\widehat{\alpha^{q}_{B}})\\
&=\langle \alpha^{p},\alpha^{q}\rangle_{U}+h_{B}(\widetilde{\alpha^{p}_{B}},\widehat{\alpha^{q}_{B}})+h_{B}(\widehat{\alpha^{p}_{B}},\widetilde{\alpha^{q}_{B}})+h_{B}(\widehat{\alpha^{p}_{B}},\widehat{\alpha^{q}_{B}})
\end{align*}
\begin{lemma}
\label{lem:Orthog}
Both of the cross-terms are zero
\begin{align*}
h_{B}(\widetilde{\alpha^{p}_{B}},\widehat{\alpha^{q}_{B}})=0=h_{B}(\widehat{\alpha^{p}_{B}},\widetilde{\alpha^{q}_{B}})
\end{align*}
\end{lemma}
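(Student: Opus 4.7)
The plan is to show that the perverse-sheaf morphism underlying $h_B$ is block diagonal with respect to the decomposition \eqref{eq:DecompThm}, which forces the cross-terms in the pairing to be zero even before passing to cohomology. Recall from the construction of $h_B$, as unpacked in the proof of \cref{prop:CommTri}, that $h_B$ comes from a morphism of perverse sheaves
\begin{align*}
{}^{\mathfrak{p}}\mathcal{H}^{2p-1+b}(\mathbf{R}\pi_{*}\mathbb{Q}_{\ell}(p))\to D({}^{\mathfrak{p}}\mathcal{H}^{2q-1+b}(\mathbf{R}\pi_{*}\mathbb{Q}_{\ell}(q))).
\end{align*}
Using \eqref{eq:DecompThm} and the identity $D\circ j_{!*}\simeq j_{!*}\circ D$, both sides decompose as
\begin{align*}
j_{!*}\mathbf{R}^{2p-1}\pi_{U*}\mathbb{Q}_{\ell}(p)[b]\oplus C_{2p-1}\quad\text{and}\quad j_{!*}D(\mathbf{R}^{2q-1}\pi_{U*}\mathbb{Q}_{\ell}(q)[b])\oplus D(C_{2q-1}),
\end{align*}
so the morphism is given by a $2\times 2$ matrix whose diagonal entries recover (the map corresponding to) $h_U$ on the intersection-complex pieces and the self-pairing on the $C$ pieces.

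The key step is to show that the two off-diagonal entries vanish. I would invoke the defining property of the intermediate extension: $j_{!*}\mathcal{F}$, being the image of $j_{!}\mathcal{F}$ in $\mathbf{R}j_{*}\mathcal{F}$ inside the perverse heart, admits no non-zero subobject or quotient supported on $Z$. For the map $j_{!*}\mathbf{R}^{2p-1}\pi_{U*}\mathbb{Q}_{\ell}(p)[b]\to D(C_{2q-1})$, the target is supported on $Z$ because $C_{2q-1}=i_{*}\widehat{C}_{2q-1}$ and duality commutes with $i_{*}$; hence its image would be a quotient of $j_{!*}\mathbf{R}^{2p-1}\pi_{U*}\mathbb{Q}_{\ell}(p)[b]$ supported on $Z$, and so must be zero. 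Symmetrically, for the map $C_{2p-1}\to j_{!*}D(\mathbf{R}^{2q-1}\pi_{U*}\mathbb{Q}_{\ell}(q)[b])$ the source is supported on $Z$ while the target has no non-zero subobject supported on $Z$, so it vanishes as well.

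With both off-diagonal morphisms zero in the perverse category, applying $H^{1-b}_{\text{\'et}}(B,-)$ shows that $h_B$ pairs a class in $H^{1-b}_{\text{\'et}}(B,j_{!*}\mathbf{R}^{2p-1}\pi_{U*}\mathbb{Q}_{\ell}(p)[b])$ with a class in $H^{1-b}_{\text{\'et}}(B,C_{2q-1})$ to zero (and symmetrically), giving exactly the vanishing of the cross-terms. I do not anticipate a real obstacle here; the only delicate point is verifying the compatibility of the block decomposition with the full derived pairing on the nose, but this is handled in the same way as \cref{prop:CommTri} via the continuation principle once one restricts to $U$.
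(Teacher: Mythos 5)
Your proposal is correct and takes essentially the same approach as the paper: decompose the perverse-sheaf morphism underlying $h_{B}$ via the decomposition theorem, identify the cross-terms with the two off-diagonal components $j_{!*}\mathbf{R}^{2p-1}\pi_{U*}\mathbb{Q}_{\ell}(p)[b]\rightarrow D(C_{2q-1})$ and $C_{2p-1}\rightarrow j_{!*}D(\mathbf{R}^{2q-1}\pi_{U*}\mathbb{Q}_{\ell}(q)[b])$, and show these vanish. The only cosmetic difference is the justification of that vanishing: the paper computes $\textnormal{Hom}(j_{!*}A,i_{*}B)\simeq\textnormal{Hom}(i^{*}j_{!*}A,B)=0$ using $i^{*}j_{!*}A\in{}^{\mathfrak{p}}D^{\leq -1}(Z)$ together with duality, while you invoke the equivalent characterization of $j_{!*}$ as admitting no nonzero subobject or quotient supported on $Z$.
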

\begin{proof}
Notice that the pairing $h_{B}$ corresponds to the map
\begin{align*}
{}^{\mathfrak{p}}\mathcal{H}^{2p-1+b}(\mathbf{R}\pi_{*}\mathbb{Q}_{\ell}(p))\rightarrow D({}^{\mathfrak{p}}\mathcal{H}^{2q-1+b}(\mathbf{R}\pi_{*}\mathbb{Q}_{\ell}(q)))
\end{align*}
Using the decomposition theorem this splits as a direct sum of maps, and the cross-terms come from the following respective parts of this decomposition
\begin{align*}
j_{!*}\mathbf{R}^{2p-1}\pi_{U*}\mathbb{Q}_{\ell}(p)[b]\rightarrow i_{*}D(\widehat{C}_{2q-1}')\\[5pt]
i_{*}\widehat{C}_{2p-1}\rightarrow j_{!*}D(\mathbf{R}^{2q-1}\pi_{U*}\mathbb{Q}_{\ell}(q)[b])
\end{align*}
where $i_{*}\widehat{C}_{2p-1}=C_{2p-1}$, $i_{*}\widehat{C}_{2q-1}'=C_{2q-1}'$ are the terms supported on $Z$.\\

We claim that in general for $A\in\text{Perv}(U)$, $B\in\text{Perv}(Z)$
\begin{align*}
\textnormal{Hom}(j_{!*}A,i_{*}B)=0=\textnormal{Hom}(i_{*}B,j_{!*}A)
\end{align*}
from which the lemma follows. The two statements are dual, so we just prove the first. Indeed, we have
\begin{align*}
\textnormal{Hom}(j_{!*}A,i_{*}B)\simeq\textnormal{Hom}(i^{*}j_{!*}A,B)
\end{align*}
where $i^{*}j_{!*}A\in {}^{\mathfrak{p}}D^{\leq -1}(Z)$ by Lemma III.5.1.~of \cite{Kie}, and $B\in {}^{\mathfrak{p}}D^{\geq 0}(Z)$ since $B$ is perverse. Hence,
\begin{align*}
\textnormal{Hom}(i^{*}j_{!*}A,B)=0
\end{align*}
and so we are done.

\end{proof}
It follows from the lemma that
\begin{align*}
\langle \alpha^{p},\alpha^{q}\rangle_{B}=\langle \alpha^{p},\alpha^{q}\rangle_{U}+h_{B}(\widehat{\alpha^{p}_{B}},\widehat{\alpha^{q}_{B}})
\end{align*}
In general the spare term $h_{B}(\widehat{\alpha^{p}_{B}},\widehat{\alpha^{q}_{B}})$ will be non-zero, but if we choose our extension of $\alpha^{p}$ appropriately then we claim that the class $\widehat{\alpha^{p}_{B}}$ will be zero.
\begin{theorem}
\label{th:BeilConj}
Suppose that the cycle $z^{p}_{B}$ on $\mathcal{X}$ is such that the image of $\textnormal{cl}(z^{p}_{B})$ inside $H^{0}_{\textnormal{\'et}}(B,\textnormal{\textbf{R}}^{2p}\pi_{*}\mathbb{Q}_{\ell}(p)))$ is zero, then $\widehat{\alpha^{p}_{B}}=0$.
\end{theorem}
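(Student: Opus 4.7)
The plan is to use the decomposition theorem to identify $\widehat{\alpha^{p}_{B}}$ as a direct summand of $\textnormal{cl}(z^{p}_{B})$, and then show that the Leray edge map $H^{2p}_{\textnormal{\'et}}(\mathcal{X},\mathbb{Q}_{\ell}(p))\to H^{0}_{\textnormal{\'et}}(B,\textbf{R}^{2p}\pi_{*}\mathbb{Q}_{\ell}(p))$, restricted to this summand, is an isomorphism onto its image. The hypothesis then forces $\widehat{\alpha^{p}_{B}}=0$.

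First I would take $\mathcal{H}^{2p}$ of both sides of the decomposition theorem \eqref{eq:DecompThm} to obtain
\begin{align*}
\textbf{R}^{2p}\pi_{*}\mathbb{Q}_{\ell}(p) \simeq \bigoplus_{k\geq 0}\mathcal{H}^{2p-k-b}\bigl(j_{!*}\textbf{R}^{k}\pi_{U*}\mathbb{Q}_{\ell}(p)[b]\oplus C_{k}\bigr),
\end{align*}
and separately take hypercohomology of \eqref{eq:DecompThm} itself to split $H^{2p}_{\textnormal{\'et}}(\mathcal{X},\mathbb{Q}_{\ell}(p))$ into the direct sum of $H^{2p-k-b}_{\textnormal{\'et}}(B,j_{!*}\textbf{R}^{k}\pi_{U*}\mathbb{Q}_{\ell}(p)[b]\oplus C_{k})$ over $k\geq 0$. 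By naturality of the hypercohomology spectral sequence under direct sums, the Leray edge map restricts on each $k$-summand to the edge map of the spectral sequence associated to the single perverse sheaf $j_{!*}\textbf{R}^{k}\pi_{U*}\mathbb{Q}_{\ell}(p)[b]\oplus C_{k}$.

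Next, I would specialize to $k=2p-1$, which is where $\alpha^{p}_{B}$ lives, and further split using $j_{!*}\textbf{R}^{2p-1}\pi_{U*}\mathbb{Q}_{\ell}(p)[b]\oplus C_{2p-1}$. On the $C_{2p-1}$ factor, I claim the edge map is an isomorphism: since $C_{2p-1}$ is perverse and supported on $Z$ of dimension $\leq b-1$, its constructible cohomology sheaves vanish outside degrees $[1-b,0]$, so arguing exactly as in \cref{lem:Vnsh} only the bidegree $(s,t)=(0,1-b)$ contributes to total degree $1-b$, giving
\begin{align*}
H^{1-b}_{\textnormal{\'et}}(B,C_{2p-1})\xrightarrow{\sim}H^{0}_{\textnormal{\'et}}(B,\mathcal{H}^{1-b}(C_{2p-1})).
\end{align*}
The right-hand side sits as a direct summand inside $H^{0}_{\textnormal{\'et}}(B,\textbf{R}^{2p}\pi_{*}\mathbb{Q}_{\ell}(p))$ via the displayed decomposition above.

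Finally, $\widehat{\alpha^{p}_{B}}$ is, by construction, the $C_{2p-1}$-component of the $(2p-1)$-graded piece of $\textnormal{cl}(z^{p}_{B})$ under the decomposition theorem. Following it through these identifications, its image in $H^{0}_{\textnormal{\'et}}(B,\textbf{R}^{2p}\pi_{*}\mathbb{Q}_{\ell}(p))$ agrees with the projection of the image of $\textnormal{cl}(z^{p}_{B})$ onto the $(k=2p-1,\,C_{2p-1})$ summand. The hypothesis kills the full image of $\textnormal{cl}(z^{p}_{B})$, hence this projection; since the restricted edge map is an isomorphism, $\widehat{\alpha^{p}_{B}}=0$.

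The main obstacle is the bookkeeping at the first step: verifying carefully that the splittings of $H^{2p}_{\textnormal{\'et}}(\mathcal{X},\mathbb{Q}_{\ell}(p))$ and of $H^{0}_{\textnormal{\'et}}(B,\textbf{R}^{2p}\pi_{*}\mathbb{Q}_{\ell}(p))$ induced by \eqref{eq:DecompThm} are intertwined by the Leray edge map summand-by-summand. This should follow from naturality of the hypercohomology spectral sequence under direct sums together with the fact that \eqref{eq:DecompThm} is an actual splitting in the derived category, but one has to keep track of the splittings and the resulting bigraded pieces consistently.
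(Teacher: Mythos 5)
Your plan is correct, but it is a genuinely different route from the paper's. The paper proves \cref{th:BeilConj} by restricting to $Z$: it pulls \eqref{eq:DecompThm} back along $i$, uses affineness of $j$ to compute ${}^{\mathfrak{p}}\mathcal{H}^{2p-1+b}(\mathbf{R}\pi_{Z*}\mathbb{Q}_{\ell}(p))$, identifies $\widehat{\alpha^{p}_{B}}$ with a class built from the restriction of $\textnormal{cl}(z^{p}_{B})$ to $\mathcal{X}_{Z}$, and then manufactures the auxiliary map \eqref{eq:PervMap} out of truncation triangles and the vanishing statements \cref{lem:BoundCoh} and \cref{lem:BoundCohZ} so that the hypothesis, pushed into $H^{0}_{\text{\'et}}(Z,\mathbf{R}^{2p}\pi_{Z*}\mathbb{Q}_{\ell}(p))$, kills everything. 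You instead stay on $B$ and use additivity of the hypercohomology (Leray) spectral sequence over the fixed splitting \eqref{eq:DecompThm}: on the summand $C_{2p-1}[-(2p-1)-b]$ the edge map in total degree $2p$ is the isomorphism $H^{1-b}_{\text{\'et}}(B,C_{2p-1})\simeq H^{0}_{\text{\'et}}(B,\mathcal{H}^{1-b}(C_{2p-1}))$, because $C_{2p-1}$ is perverse with support of dimension $\leq b-1$ --- the same numerical input ($\mathcal{H}^{t}(C_{2p-1})=0$ outside $[1-b,0]$) that powers the paper's vanishing lemmas, but packaged so that no passage to $Z$, no affineness of $j$, and no boundary-map construction are needed. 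The one step you should make explicit (you flag it yourself) is that $\widehat{\alpha^{p}_{B}}$, which \cref{def:PervCyc} and \eqref{eq:Decomp} define through the perverse spectral sequence, really is the $(k=2p-1,\,C_{2p-1})$-component of $\textnormal{cl}(z^{p}_{B})$ for the splitting of $H^{2p}_{\text{\'et}}(\mathcal{X},\mathbb{Q}_{\ell}(p))$ induced by the \emph{same} isomorphism \eqref{eq:DecompThm}: under such a splitting the truncation maps used in the proof of \cref{prop:Coh-Geom} become projections onto partial sums, so this is a routine compatibility, provided one decomposition is used consistently throughout (and likewise that the edge map, being a natural transformation of additive functors, respects the splitting summand by summand). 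With that spelled out, your argument is shorter and makes transparent exactly which direct summand of $H^{0}_{\text{\'et}}(B,\mathbf{R}^{2p}\pi_{*}\mathbb{Q}_{\ell}(p))$ detects the obstruction $\widehat{\alpha^{p}_{B}}$; what the paper's detour buys in exchange is the intermediate description of $\widehat{\alpha^{p}_{B}}$ via the restriction of the cycle class to $\mathcal{X}_{Z}$, which has some independent interest.
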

In the above situation then, we have
\begin{align*}
\langle \alpha^{p},\alpha^{q}\rangle_{B}=\langle \alpha^{p},\alpha^{q}\rangle_{U}
\end{align*}
and this completes the proof of \cref{th:CompCoh}. We conjecture that such a cycle like this can always be found.
\begin{conj}
\label{conj:BeilCyc}
For any $\alpha^{p}\in CH^{p}_{\textnormal{hom}}(X)_{\mathbb{Q}}$, we can find an extension $z^{p}_{B}$ on $\mathcal{X}$ such that the image of $\textnormal{cl}(z^{p}_{B})$ inside $H^{0}_{\textnormal{\'et}}(B,\textnormal{\textbf{R}}^{2p}\pi_{*}\mathbb{Q}_{\ell}(p)))$ is zero.
\end{conj}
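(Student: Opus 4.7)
The approach is to start with an arbitrary extension of $\alpha^p$ and correct it by a cycle supported over $Z$ so as to kill the obstruction in $H^{0}_{\text{\'et}}$. First, I would fix any representative $z^{p}$ of $\alpha^{p}$ and any extension $z^{p}_{B}\in CH^{p}(\mathcal{X})_{\mathbb{Q}}$ to the model, and let $\eta$ denote the image of $\mathrm{cl}(z^{p}_{B})$ in $H^{0}_{\text{\'et}}(B,\mathbf{R}^{2p}\pi_{*}\mathbb{Q}_{\ell}(p))$. Because $\pi_{U}$ is smooth and proper and $\alpha^{p}$ is homologically trivial, smooth proper base change forces $\eta|_{U}=0$, so $\eta$ lies in
\[
K:=\mathrm{Ker}\bigl(H^{0}_{\text{\'et}}(B,\mathbf{R}^{2p}\pi_{*}\mathbb{Q}_{\ell}(p))\to H^{0}_{\text{\'et}}(U,\mathbf{R}^{2p}\pi_{U*}\mathbb{Q}_{\ell}(p))\bigr),
\]
which via the localization triangle for $j:U\hookrightarrow B$, $i:Z\hookrightarrow B$ is controlled by the sheaf $i^{!}\mathbf{R}^{2p}\pi_{*}\mathbb{Q}_{\ell}(p)$ supported on $Z$.

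Any two extensions of $z^{p}$ to $\mathcal{X}$ differ by a codimension-$p$ cycle supported on $\pi^{-1}(Z)$, and cycle classes of such cycles land precisely in $K$. So the conjecture reduces to the statement that the cycle class map from codimension-$p$ cycles supported on $\pi^{-1}(Z)$ surjects onto (the relevant subgroup of) $K$. I would attack this by a noetherian induction along a stratification of $Z$ into smooth locally closed subschemes $S$: for the top-dimensional stratum, smooth proper base change for $\pi^{-1}(S)\to S$ would identify $\eta|_{S}$ with a family of classes in $H^{2p}_{\text{\'et}}(\mathcal{X}_{\bar{s}},\mathbb{Q}_{\ell}(p))$ at a geometric generic point $\bar{s}$ of $S$. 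One would then try to realize these by an algebraic cycle on $\pi^{-1}(\overline{S})$, subtract it from $z^{p}_{B}$, and iterate on the smaller stratum $Z\setminus S$.

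The main obstacle is precisely this algebraicity step at each stratum. Even though $\eta_{\bar{s}}$ is not arbitrary, being the specialization of a genuine algebraic class from the nearby smooth fibers, the singular fiber $\mathcal{X}_{\bar{s}}$ may carry $\ell$-adic cohomology that is not exhausted by the image of its $\mathbb{Q}$-Chow groups; what we need is therefore a Tate-conjecture-flavoured input, and this seems out of reach of the formal tools developed in this paper. This is, I believe, why the statement is posed here only as a conjecture. A tractable special case that I would attempt first, as evidence, is that of semistable reduction: then $\pi^{-1}(Z)$ is a reduced normal crossings divisor whose irreducible components and their intersections give an explicit supply of algebraic cycles, and the weight-monodromy spectral sequence computing $H^{2p}(\mathcal{X}_{\bar{s}},\mathbb{Q}_{\ell}(p))$ in terms of these intersections should reduce the construction of the correcting cycle to a combinatorial argument on the dual complex of $\pi^{-1}(Z)$, where the weight-$0$ piece containing $\eta_{\bar{s}}$ is visibly algebraic.
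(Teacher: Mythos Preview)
The statement you were asked to address is recorded in the paper as a \emph{conjecture}, not a theorem: immediately before it the authors write ``We conjecture that such a cycle like this can always be found,'' and no proof or even partial argument is given anywhere in the text. There is therefore no paper proof to compare your proposal against.

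That said, your analysis is sound and you have correctly diagnosed why the statement remains conjectural. The reduction you outline---take an arbitrary extension, observe via smooth proper base change that the obstruction $\eta$ dies on $U$ and hence is controlled by data supported on $Z$, then try to correct $z^{p}_{B}$ by cycles supported on $\pi^{-1}(Z)$---is the natural strategy, and you have pinpointed the real obstruction: at each stratum one must produce an \emph{algebraic} cycle on the (possibly singular) fibre $\mathcal{X}_{\bar s}$ realizing the class $\eta_{\bar s}$, which is a Tate-type algebraicity statement not accessible by the formal perverse-sheaf machinery of the paper. Your own remark that ``this is, I believe, why the statement is posed here only as a conjecture'' is exactly right. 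The semistable heuristic you propose as evidence is reasonable, though be aware that even there the argument is not purely combinatorial: one still needs to know that the relevant graded pieces of the weight spectral sequence for $H^{2p}(\mathcal{X}_{\bar s},\mathbb{Q}_{\ell}(p))$ are spanned by algebraic classes, which again feeds back into Tate-type input.
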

Let us now prove \cref{th:BeilConj}.
\begin{proof}[Proof. (of \cref{th:BeilConj})]
We start by noting that if we pull the decomposition \eqref{eq:DecompThm} back to $Z$ we get
\begin{align}
\label{eq:DecompZ}
\mathbf{R}\pi_{Z*}\mathbb{Q}_{\ell}(p)\simeq\bigoplus_{k\geq 0} \big(i^{*}j_{!*}(\mathbf{R}^{k}\pi_{U*}\mathbb{Q}_{\ell}(p)[b])\oplus \widehat{C}_{k}\big)[-k-b]
\end{align}
Since $j$ is affine, we have that $i^{*}j_{!*}(\mathbf{R}^{k}\pi_{U*}\mathbb{Q}_{\ell}(p)[b])[-1]$ is a perverse sheaf on $Z$ (see the discussion after Corollary III.6.2.~in \cite{Kie}), and therefore
\begin{align*}
{}^{\mathfrak{p}}\mathcal{H}^{2p-1+b}(\mathbf{R}\pi_{Z*}\mathbb{Q}_{\ell}(p))=i^{*}j_{!*}(\mathbf{R}^{2p}\pi_{U*}\mathbb{Q}_{\ell}(p)[b])[-1]\oplus \widehat{C}_{2p-1}
\end{align*}
Now, we have the following commutative diagram
\begin{center}
\begin{tikzcd}
  \mathbf{R}\pi_{*}\mathbb{Q}_{\ell}(p) \arrow[d] \arrow[twoheadrightarrow]{r} 
  & {}^{\mathfrak{p}}\mathcal{H}^{2p-1+b}(\textbf{R}\pi_{*}\mathbb{Q}_{\ell}(p))[-2p+1-b] \arrow[twoheadrightarrow]{r}
  & C_{2p-1}[-2p+1-b] \arrow[d, "\sim"]\\ 
   i_{*}\mathbf{R}\pi_{Z*}\mathbb{Q}_{\ell}(p) \arrow[twoheadrightarrow]{r}
   & i_{*}{}^{\mathfrak{p}}\mathcal{H}^{2p-1+b}(\textbf{R}\pi_{Z*}\mathbb{Q}_{\ell}(p))[-2p+1-b] \arrow[twoheadrightarrow]{r}
  & i_{*}\widehat{C}_{2p-1}[-2p+1-b]
\end{tikzcd}
\end{center}
where the horizontal arrows are the projection maps from the respective decompositions \eqref{eq:DecompThm} and \eqref{eq:DecompZ}.\\

When we take cohomology of the above diagram we get
\begin{center}
\begin{tikzcd}
  H^{2p}_{\text{\'et}}(\mathcal{X},\mathbb{Q}_{\ell}(p)) \arrow[d] \arrow[r] 
  & H^{1-b}_{\text{\'et}}(B,{}^{\mathfrak{p}}\mathcal{H}^{2p-1+b}(\textbf{R}\pi_{*}\mathbb{Q}_{\ell}(p))) \arrow[twoheadrightarrow]{r}
  & H^{1-b}_{\text{\'et}}(B,C_{2p-1}) \arrow[d, "\sim"]\\ 
   H^{2p}_{\text{\'et}}(\mathcal{X}_{Z},\mathbb{Q}_{\ell}(p)) \arrow[r]
  & H^{1-b}_{\text{\'et}}(Z,{}^{\mathfrak{p}}\mathcal{H}^{2p-1+b}(\textbf{R}\pi_{Z*}\mathbb{Q}_{\ell}(p))) \arrow[twoheadrightarrow]{r}
  & H^{1-b}_{\text{\'et}}(Z,\widehat{C}_{2p-1})
\end{tikzcd}
\end{center}
which tells us that $\widehat{\alpha^{p}_{B}}$ is equal to the image of $\text{cl}(z^{p}_{B})$ as we go round the diagram anticlockwise.\\

So to prove the theorem, it suffices to prove that the image of $\text{cl}(z^{p}_{B})$ in \\$H^{1-b}_{\text{\'et}}(Z,{}^{\mathfrak{p}}\mathcal{H}^{2p-1+b}(\textbf{R}\pi_{Z*}\mathbb{Q}_{\ell}(p)))$ is zero. By assumption on $z^{p}_{B}$ we know that the image in $H^{0}_{\text{\'et}}(B,\textbf{R}^{2p}\pi_{*}\mathbb{Q}_{\ell}(p)))$ is zero. We claim that there is a map fitting into the following commutative diagram
\begin{center}
\begin{equation}
\label{eq:CommTri}
\begin{tikzcd}
H^{2p}_{\text{\'et}}(\mathcal{X},\mathbb{Q}_{\ell}(p)) \arrow[d] \arrow[r] 
  & H^{0}_{\text{\'et}}(B,\textbf{R}^{2p}\pi_{*}\mathbb{Q}_{\ell}(p))) \arrow[d] \\
  H^{2p}_{\text{\'et}}(\mathcal{X}_{Z},\mathbb{Q}_{\ell}(p)) \arrow[dr] \arrow[r] 
  & H^{0}_{\text{\'et}}(Z,\textbf{R}^{2p}\pi_{Z*}\mathbb{Q}_{\ell}(p))) \arrow[dashrightarrow]{d} \\ 
   ~
   & H^{1-b}_{\text{\'et}}(Z,{}^{\mathfrak{p}}\mathcal{H}^{2p-1+b}(\textbf{R}\pi_{Z*}\mathbb{Q}_{\ell}(p)))
\end{tikzcd}
\end{equation}
\end{center}
from which the theorem would then follow.
\\

To construct said map, first recall that for any $K$ we have an exact triangle
\begin{align*}
\tau_{\leq 2p}K\rightarrow K\rightarrow \tau_{\geq 2p+1}K\rightarrow \tau_{\leq 2p}K[1]
\end{align*}
Applying this to $\tau_{\geq 2p}\mathcal{F}$ where $\mathcal{F}=\textbf{R}\pi_{Z*}\mathbb{Q}_{\ell}(p)$ gives us the exact triangle
\begin{align*}
\textbf{R}^{2p}\pi_{Z*}\mathbb{Q}_{\ell}(p)[-2p]\rightarrow \tau_{\geq 2p}\mathcal{F}\rightarrow \tau_{\geq 2p+1}\mathcal{F}\rightarrow \textbf{R}^{2p}\pi_{Z*}\mathbb{Q}_{\ell}(p)[-2p+1]
\end{align*}
The perverse long exact sequence induces a map
\begin{align}
\label{eq:BoundaryMap}
{}^{\mathfrak{p}}\mathcal{H}^{0}(\textbf{R}^{2p}\pi_{Z*}\mathbb{Q}_{\ell}(p)[b-1])\rightarrow {}^{\mathfrak{p}}\mathcal{H}^{2p-1+b}(\tau_{\geq 2p}\textbf{R}\pi_{Z*}\mathbb{Q}_{\ell}(p))
\end{align}
Next, we use the exact triangle
\begin{align*}
\tau_{\leq 2p-1}\mathcal{F}\rightarrow \mathcal{F}\rightarrow \tau_{\geq 2p}\mathcal{F}\rightarrow \tau_{\leq 2p-1}\mathcal{F}[1]
\end{align*}
and the perverse long exact sequence gives
\begin{align*}
{}^{\mathfrak{p}}\mathcal{H}^{2p-1+b}(\tau_{\leq 2p-1}\mathcal{F})\rightarrow {}^{\mathfrak{p}}\mathcal{H}^{2p-1+b}(\mathcal{F})\rightarrow {}^{\mathfrak{p}}\mathcal{H}^{2p-1+b}(\tau_{\geq 2p}\mathcal{F})\rightarrow {}^{\mathfrak{p}}\mathcal{H}^{2p+b}(\tau_{\leq 2p-1}\mathcal{F})
\end{align*}
\begin{lemma}
\label{lem:BoundCoh}
We have that
\begin{align*}
{}^{\mathfrak{p}}\mathcal{H}^{c}(\tau_{\leq 2p-1}\mathcal{F})=0
\end{align*}
for all $c>2p-2+b$.
\end{lemma}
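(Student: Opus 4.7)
The goal is to show that $\tau_{\leq 2p-1}\mathcal{F}$ sits in ${}^{\mathfrak{p}}D^{\leq 2p-2+b}(Z)$, from which the vanishing of ${}^{\mathfrak{p}}\mathcal{H}^{c}(\tau_{\leq 2p-1}\mathcal{F})$ for all $c>2p-2+b$ follows immediately from the definition of the perverse t-structure.

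My approach is to argue directly from the support-dimension characterization of the middle perversity. Recall that a complex $K$ on $Z$ lies in ${}^{\mathfrak{p}}D^{\leq m}(Z)$ precisely when
\[
\dim\operatorname{supp}\mathcal{H}^{j}(K)\leq m-j\quad\text{for every } j\in\mathbb{Z}.
\]
For $K=\tau_{\leq 2p-1}\mathcal{F}$ the ordinary cohomology sheaves vanish in degrees $j>2p-1$, so those indices impose no condition. For $j\leq 2p-1$, each $\mathcal{H}^{j}(K)$ is a sheaf on $Z$, hence its support has dimension at most $\dim Z\leq b-1$ since $Z=B\setminus U$ is a proper closed subscheme of the $b$-dimensional $B$. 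The required inequality is therefore satisfied as soon as $m\geq j+b-1$, and the worst case $j=2p-1$ yields exactly $m=2p-2+b$.

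There is essentially no obstacle here: the argument is the same support-dimension estimate used already in the proofs of \cref{lem:Vnsh} and \cref{lem:BndCoh}, only now applied to a truncation of $\mathcal{F}$ rather than to a perverse sheaf. Equivalently, one could invoke Lemma III.5.13 of \cite{Kie} on the (shifted) ordinary cohomology sheaves $\mathcal{H}^{j}(\mathcal{F})[-j]$ that assemble into $\tau_{\leq 2p-1}\mathcal{F}$ and combine via the standard truncation triangles, but the direct route above is the most economical, and it fits the pattern established elsewhere in the section.
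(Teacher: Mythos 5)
Your proof is correct, and it takes a genuinely more direct route than the paper's. You verify the support-dimension condition defining ${}^{\mathfrak{p}}D^{\leq 2p-2+b}(Z)$ head-on: since $\mathcal{H}^{j}(\tau_{\leq 2p-1}\mathcal{F})$ vanishes for $j>2p-1$ and every nonzero cohomology sheaf is supported on $Z$ with $\dim Z\leq b-1$, the inequality $\dim\operatorname{supp}\mathcal{H}^{j}\leq (2p-2+b)-j$ holds in the worst case $j=2p-1$, and the claimed vanishing of ${}^{\mathfrak{p}}\mathcal{H}^{c}$ for $c>2p-2+b$ is exactly the statement that the complex lies in ${}^{\mathfrak{p}}D^{\leq 2p-2+b}(Z)$. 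The paper instead argues by d\'evissage: it uses the standard truncation triangle $\tau_{\leq 2p-2}\mathcal{F}\rightarrow\tau_{\leq 2p-1}\mathcal{F}\rightarrow\mathbf{R}^{2p-1}\pi_{Z*}\mathbb{Q}_{\ell}(p)[-2p+1]$, the perverse long exact sequence, and a sub-lemma (\cref{lem:BoundCohZ}, proved by a Hom-vanishing argument via Lemma III.5.13 of \cite{Kie}) handling the single-sheaf quotient, then iterates downward in the truncation index until the complex is exhausted (which uses that $\mathcal{F}=\mathbf{R}\pi_{Z*}\mathbb{Q}_{\ell}(p)$ is bounded below). The two arguments rest on the same input, $\dim Z\leq b-1$; yours is shorter, subsumes \cref{lem:BoundCohZ} as the special case of a complex concentrated in one degree, and avoids the induction, while the paper's version stays within the Hom-vanishing formalism it uses elsewhere in the section rather than invoking the support characterization of the perverse t-structure. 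Either is acceptable; if you use the support characterization, cite it explicitly (BBD or the corresponding statement in \cite{Kie}) since the paper otherwise only quotes the one-sided estimate relating ${}^{\mathfrak{p}}D^{\geq 0}$ to the standard t-structure.
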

\begin{proof}
We use the exact triangle
\begin{align*}
\label{eq:ExactSeq}
\tau_{\leq 2p-2}\mathcal{F}\rightarrow \tau_{\leq 2p-1}\mathcal{F}\rightarrow \textbf{R}^{2p-1}\pi_{Z*}\mathbb{Q}_{\ell}(p)[-2p+1]\rightarrow \tau_{\leq 2p-2}\mathcal{F}[1]
\end{align*}
The perverse long exact sequence gives us the exact sequence
\begin{align}
{}^{\mathfrak{p}}\mathcal{H}^{c}(\tau_{\leq 2p-2}\mathcal{F})\rightarrow {}^{\mathfrak{p}}\mathcal{H}^{c}(\tau_{\leq 2p-1}\mathcal{F})\rightarrow {}^{\mathfrak{p}}\mathcal{H}^{c-2p+1}(\textbf{R}^{2p-1}\pi_{Z*}\mathbb{Q}_{\ell}(p))
\end{align}
Note that
\begin{align*}
{}^{\mathfrak{p}}\mathcal{H}^{c-2p+1}(\textbf{R}^{2p-1}\pi_{Z*}\mathbb{Q}_{\ell}(p))={}^{\mathfrak{p}}\mathcal{H}^{c-2p+2-b}(\textbf{R}^{2p-1}\pi_{Z*}\mathbb{Q}_{\ell}(p)[b-1])
\end{align*}
and we have the following sub-lemma
\begin{lemma}
\label{lem:BoundCohZ}
For any sheaf $\mathcal{G}$ on $Z$ we have
\begin{align*}
{}^{\mathfrak{p}}\mathcal{H}^{c}(\mathcal{G}[b-1])=0
\end{align*}
for all $c>0$.
\end{lemma}
\begin{proof}
We claim that $\tau_{\geq 0}\mathcal{G}[b]=0$, which is enough to prove the lemma.
\\

Indeed, for any $A\in {}^{\mathfrak{p}}D^{\geq 0}(Z)$ we know that $A\in D^{\geq 1-b}(Z)$ by Lemma III.5.13.~of \cite{Kie}, and since obviously $\mathcal{G}[b]\in D^{\leq -b}(Z)$, it follows that
\begin{align*}
\textnormal{Hom}(\mathcal{G}[b],A)=0
\end{align*}
which implies that $\tau_{\geq 0}\mathcal{G}[b]=0$ as desired.

\end{proof}
So now, to prove \cref{lem:BoundCoh} using \eqref{eq:ExactSeq}; by \cref{lem:BoundCohZ} the RHS group is zero for all $c>2p-2+b$ and thus it just remains to show that the LHS group ${}^{\mathfrak{p}}\mathcal{H}^{c}(\tau_{\leq 2p-2}\mathcal{F})$ is also zero for such $c$.
\\

To do this one simply uses the same argument, replacing $2p-1$ with $2p-2$, and so on until we find $n$ large enough such that $\tau_{\leq 2p-n}\mathcal{F}=0$.

\end{proof}
By \cref{lem:BoundCoh} then, we have an isomorphism
\begin{align*}
{}^{\mathfrak{p}}\mathcal{H}^{2p-1+b}(\mathcal{F})\xrightarrow{\sim} {}^{\mathfrak{p}}\mathcal{H}^{2p-1+b}(\tau_{\geq 2p}\mathcal{F})
\end{align*}
which if we compose with \eqref{eq:BoundaryMap} gives us a map
\begin{align*}
{}^{\mathfrak{p}}\mathcal{H}^{0}(\textbf{R}^{2p}\pi_{Z*}\mathbb{Q}_{\ell}(p)[b-1])\rightarrow {}^{\mathfrak{p}}\mathcal{H}^{2p-1+b}(\textbf{R}\pi_{Z*}\mathbb{Q}_{\ell}(p))
\end{align*}
Taking cohomology gives
\begin{align*}
H^{1-b}_{\text{\'et}}(Z,{}^{\mathfrak{p}}\mathcal{H}^{0}(\textbf{R}^{2p}\pi_{Z*}\mathbb{Q}_{\ell}(p)[b-1]))\rightarrow H^{1-b}_{\text{\'et}}(Z,{}^{\mathfrak{p}}\mathcal{H}^{2p-1+b}(\textbf{R}\pi_{Z*}\mathbb{Q}_{\ell}(p)))
\end{align*}
which if we compose with the spectral sequence map
\begin{align*}
H^{0}_{\text{\'et}}(Z,\textbf{R}^{2p}\pi_{Z*}\mathbb{Q}_{\ell}(p))\rightarrow H^{1-b}_{\text{\'et}}(Z,{}^{\mathfrak{p}}\mathcal{H}^{0}(\textbf{R}^{2p}\pi_{Z*}\mathbb{Q}_{\ell}(p)[b-1]))
\end{align*}
gives us the desired map
\begin{align}
\label{eq:PervMap}
H^{0}_{\text{\'et}}(Z,\textbf{R}^{2p}\pi_{Z*}\mathbb{Q}_{\ell}(p))\rightarrow H^{1-b}_{\text{\'et}}(Z,{}^{\mathfrak{p}}\mathcal{H}^{2p-1+b}(\textbf{R}\pi_{Z*}\mathbb{Q}_{\ell}(p)))
\end{align}
Finally, the commutativity of \eqref{eq:CommTri} results from the following commutative diagram which fits all the spectral sequence maps together
\begin{center}
\begin{tikzcd}
  H^{0}_{\text{\'et}}(Z,\textbf{R}^{2p}\pi_{Z*}\mathbb{Q}_{\ell}(p)) \arrow[r] \arrow[d]
  & H^{1-b}_{\text{\'et}}(Z,{}^{\mathfrak{p}}\mathcal{H}^{0}(\textbf{R}^{2p}\pi_{Z*}\mathbb{Q}_{\ell}(p)[b-1])) \arrow[d] \\
  H^{2p}_{\text{\'et}}(Z,\tau_{\geq 2p}\textbf{R}\pi_{Z*}\mathbb{Q}_{\ell}(p)) \arrow[r]
  & H^{1-b}_{\text{\'et}}(Z,{}^{\mathfrak{p}}\mathcal{H}^{2p-1+b}(\tau_{\geq 2p}\textbf{R}\pi_{Z*}\mathbb{Q}_{\ell}(p))) \\
  H^{2p}_{\text{\'et}}(\mathcal{X}_{Z},\mathbb{Q}_{\ell}(p)) \arrow[r] \arrow[u] \arrow[bend left=80]{uu}
  & H^{1-b}_{\text{\'et}}(Z,{}^{\mathfrak{p}}\mathcal{H}^{2p-1+b}(\textbf{R}\pi_{Z*}\mathbb{Q}_{\ell}(p))) \arrow["\sim"']{u}
\end{tikzcd}
\end{center}
Observe that the map \eqref{eq:PervMap} is obtained by going clockwise around the diagram. This completes the proof of \cref{th:BeilConj}.

\end{proof}
\section{The Projection Formula}
\label{sec:ProjForm}
From this point on, let us denote Beilinson's pairing as $\langle -,-\rangle_{X}$ (recall that we previously denoted it by $\langle -,-\rangle_{U}$), and likewise the pairing $h_{U}$ as $h_{X}$.\\

Suppose $f:B'\rightarrow B$ is a dominant proper morphism of relative dimension $0$ which is generically finite, and let $K'\rightarrow K$ be the induced map on function fields. Then take a finite map $g:X'\rightarrow X$ fitting into the following commutative diagram
\begin{center}
\begin{tikzcd}
  X' \arrow[r, "g"] \arrow[d]
  & X \arrow[d] \\
  K' \arrow[r]
  & K
\end{tikzcd}
\end{center}
\begin{theorem}[Projection Formula]
\label{th:ProjForm}
For any $\alpha^{p}\in CH_{\textnormal{hom}}^{p}(X')_{\mathbb{Q}}$ and $\beta^{q}\in CH_{\textnormal{hom}}^{q}(X)_{\mathbb{Q}}$ the projection formula holds
\begin{align*}
\langle g_{*}\alpha^{p},\beta^{q}\rangle_{X}=f_{*}\langle \alpha^{p},g^{*}\beta^{q}\rangle_{X'}
\end{align*}
\end{theorem}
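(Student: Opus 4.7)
The plan is to unwind the definition $\langle -,-\rangle_X = h_X(\alpha^p_X, \beta^q_X)$ and establish the projection formula in two stages: first for the assignment $\alpha \mapsto \alpha^p_X$ of cohomology classes to cycles, and then for the cohomological pairing $h_X$ itself. After possibly shrinking $U$, one may assume $f_U: U' := f^{-1}(U) \to U$ is finite \'etale, that both $\pi: \mathcal{X}_U \to U$ and $\pi': \mathcal{X}'_{U'} \to U'$ are smooth projective models, and that $g$ extends to a finite morphism $\tilde g: \mathcal{X}'_{U'} \to \mathcal{X}_U$ fitting over $f_U$.

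First I would show that the class construction is functorial: $(g_*\alpha^p)_X = f_*(\alpha^p_{X'})$ and $(g^*\beta^q)_{X'} = f^*(\beta^q_X)$ as elements of $H^{1-b}_{\text{\'et}}(B, j_{!*}\mathbf{R}^{2p-1}\pi_*\mathbb{Q}_\ell(p)[b])$ and its $B'$-analogue. The operations $f_*$ and $f^*$ on these groups are induced by the trace $\tilde g_*\mathbb{Q}_\ell \to \mathbb{Q}_\ell$ and the unit $\mathbb{Q}_\ell \to \tilde g_*\mathbb{Q}_\ell$, combined with proper base change along the Cartesian-type square. These identities reduce to the standard functoriality of the cycle class map under proper pushforward and pullback along the finite morphism $\tilde g$, together with naturality of the spectral sequence used to produce $\alpha^p_X$ from $\text{cl}(z^p_B)$.

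The heart of the argument is the projection formula at the cohomological level
\[
h_X(f_*a, b) = f_*\, h_{X'}(a, f^*b).
\]
Since $h_X$ arises by applying $j_{!*}$ to the duality pairing
\[
\mathbf{R}^{2p-1}\pi_*\mathbb{Q}_\ell(p)[b]\otimes^{\mathbf{L}} \mathbf{R}^{2q-1}\pi_*\mathbb{Q}_\ell(q)[b]\to \mathbb{Q}_\ell(1)[2b]
\]
on $U$ (and likewise for $h_{X'}$ on $U'$), one first checks the identity on $U$, where $f_U$ is finite \'etale: this follows by combining proper base change, the derived projection formula $f_{U*}(A\otimes^{\mathbf{L}} f_U^*B) \simeq f_{U*}A\otimes^{\mathbf{L}} B$, and the trace $f_{U*}\mathbb{Q}_\ell \to \mathbb{Q}_\ell$. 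Because $j_{!*}$ is fully faithful and commutes with the relevant operations for the \'etale morphism $f_U$, the identity propagates to the intermediate extensions on $B$ and $B'$; alternatively one invokes the continuation principle (Corollary III.5.11.~of \cite{Kie}) to descend the identity from the open part to all of $B$.

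Combining the two steps yields
\[
\langle g_*\alpha^p, \beta^q\rangle_X = h_X(f_*\alpha^p_{X'}, \beta^q_X) = f_*\, h_{X'}(\alpha^p_{X'}, f^*\beta^q_X) = f_*\langle\alpha^p, g^*\beta^q\rangle_{X'}.
\]
The main obstacle is the compatibility of $j_{!*}$ with $f_*$ and $f^*$: the derived projection formula is clean on $U$ (where $f_U$ is \'etale), but must be promoted to a statement about the intermediate extensions on $B$ and $B'$, and $f$ itself is generically finite but not \'etale on all of $B$. The strategy is therefore to prove every derived identity on $U$ first, then apply $j_{!*}$ and invoke the continuation principle, rather than attempting to analyze $j_{!*}$ directly under the less well-behaved full morphism $f$.
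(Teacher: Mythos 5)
Your overall strategy---reducing to the cohomological statement (the paper's \cref{prop:ProjForm}), spreading $g$ out over an open $U$ where the models are smooth and $f_{U}$ is finite, defining $g_{*}$ and $g^{*}$ via the trace and unit maps, and verifying everything over $U$ before extending to $B$---is the same as the paper's. But there is a genuine gap at the propagation step. You assert that the identity passes to the intermediate extensions because ``$j_{!*}$ is fully faithful and commutes with the relevant operations for the \'etale morphism $f_{U}$''. The operation that actually intervenes is the pushforward along $f:B'\rightarrow B$, and $f_{*}j_{!*}^{'}$ does \emph{not} agree with $j_{!*}f_{U*}$: by the decomposition theorem they differ by a summand supported on $Z=B\setminus U$ (this is \eqref{eq:PushDecomp} in the paper). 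So before anything can be ``propagated'' one must first construct the comparison maps between $f_{*}j_{!*}^{'}\mathbf{R}^{2p-1}\pi_{*}^{'}\mathbb{Q}_{\ell}(p)[b]$ and $j_{!*}\mathbf{R}^{2p-1}\pi_{*}\mathbb{Q}_{\ell}(p)[b]$ over $B$; the paper does this by splitting off the $Z$-supported summand and using $\textnormal{Hom}$-vanishing for perverse sheaves supported on $Z$ (the diagrams \eqref{eq:CommDig} and \eqref{eq:DualDiag}), and this construction is where the real work lies.

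Second, ``first check the identity on $U$'' cannot be done at the level of cohomology classes: $\alpha$ and $\beta$ live over $B'$ and $B$, the pairing takes values in $H^{2}_{\text{\'et}}(B,\mathbb{Q}_{\ell}(1))$, and restriction to $H^{2}_{\text{\'et}}(U,\mathbb{Q}_{\ell}(1))$ is not injective, so an equality after restriction proves nothing. What can be checked over $U$ and then extended (precisely because the relevant $\textnormal{Hom}$ groups inject into their restrictions to $U$, which is the continuation principle you cite) is a morphism-level statement: that the Verdier dual of the pushforward map is the pullback map. That duality compatibility is the actual engine of the proof---once it is known, $h_{X}(g_{*}\alpha,\beta)=f_{*}h_{X'}(\alpha,g^{*}\beta)$ follows formally from the naturality of biduality, which is how the paper concludes. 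Your sketch never isolates this statement; the derived projection formula and the trace map over $U$ do not by themselves produce an identity in $H^{2}_{\text{\'et}}(B,\mathbb{Q}_{\ell}(1))$. (A minor point: you do not need, and in positive characteristic may not be able to arrange, that $f_{U}$ is \'etale; finiteness is what the argument uses.)
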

This is analogous to the projection formula for the intersection product. Indeed, if we could spread $g$ out to a finite morphism $g_{\mathcal{X}}$ over $B$
\begin{center}
\begin{tikzcd}
  \mathcal{X}' \arrow[r, "g_{\mathcal{X}}"] \arrow[d]
  & \mathcal{X} \arrow[d] \\
  B' \arrow[r, "f"']
  & B
\end{tikzcd}
\end{center}
as-well as find Beilinson cycles $\widetilde{\alpha^{p}}$, $\widetilde{\beta^{q}}$ extending $\alpha^{p}$, $\beta^{q}$, we could immediately deduce the above theorem:
\begin{align*}
\langle g_{*}\alpha^{p},\beta^{q}\rangle_{X}=g_{\mathcal{X}*}\widetilde{\alpha^{p}}\cap \widetilde{\beta^{q}}=f_{*}(\widetilde{\alpha^{p}}\cap g^{*}_{\mathcal{X}}\widetilde{\beta^{q}})=f_{*}\langle \alpha^{p},g^{*}\beta^{q}\rangle_{X'}
\end{align*}
In lieu of such extensions, we shall prove the theorem from first principles using the analogous cohomological result.\\

We can find some dense open affine subset $j:U\hookrightarrow B$ such that $g$ spreads out to a finite morphism $g_{\mathcal{X}}$ over $U$
\begin{center}
\begin{tikzcd}
  \mathcal{X}' \arrow[r, "g_{\mathcal{X}}"] \arrow[d, "\pi'"']
  & \mathcal{X} \arrow[d, "\pi"] \\
  f^{-1}(U) \arrow[r, "f_{U}"']
  & U
\end{tikzcd}
\end{center}
and for which: $f_{U}$ is finite and $\mathcal{X}'\rightarrow f^{-1}(U)$, $\mathcal{X}\rightarrow U$ are smooth projective models. Let us also denote $j':U'=f^{-1}(U)\hookrightarrow B'$.
\begin{prop}
\label{prop:ProjForm}
For any $\alpha\in H^{1-b}_{\textnormal{\'et}}(B',j_{!*}^{'}\textnormal{\textbf{{R}}}^{2p-1}\pi_{*}^{'}\mathbb{Q}_{\ell}(p)[b])$ and \newline$\beta\in H^{1-b}_{\textnormal{\'et}}(B,j_{!*}\textnormal{\textbf{R}}^{2q-1}\pi_{*}\mathbb{Q}_{\ell}(q)[b])$ the projection formula holds
\begin{align*}
h_{X}(g_{*}\alpha,\beta)=f_{*}h_{X'}(\alpha,g^{*}\beta)
\end{align*}
\end{prop}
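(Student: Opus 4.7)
The plan is to establish the projection formula first at the derived-category level over the smooth locus $U$, where all the relevant morphisms $f_U$ and $g_{\mathcal{X}}$ are finite, and then propagate the identity to $B$ and $B'$ via the intermediate extension functor. The essential input is the ordinary derived projection formula for the finite pushforward $\mathbf{R}g_{\mathcal{X}*}$, together with the compatibility of the trace maps for $\pi$ and $\pi'$ under the factorisation $\pi \circ g_{\mathcal{X}} = f_U \circ \pi'$.

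Concretely, I would first unpack the definitions: the pairing $h_X$ restricted to $U$ arises from the cup product $\mathbb{Q}_\ell(p) \otimes^{\mathbf{L}} \mathbb{Q}_\ell(q) \to \mathbb{Q}_\ell(d+1)$ on $\mathcal{X}$ composed with the trace map coming from $\pi$, and similarly for $h_{X'}$. Since $g_{\mathcal{X}}$ is finite between smooth varieties of equal dimension, we have $g_{\mathcal{X}}^!\mathbb{Q}_\ell \simeq \mathbb{Q}_\ell$ and $\mathbf{R}g_{\mathcal{X}*} = \mathbf{R}g_{\mathcal{X}!}$, so the derived projection formula $\mathbf{R}g_{\mathcal{X}*}(\mathcal{F} \otimes^{\mathbf{L}} g_{\mathcal{X}}^*\mathcal{G}) \simeq \mathbf{R}g_{\mathcal{X}*}\mathcal{F} \otimes^{\mathbf{L}} \mathcal{G}$ applies. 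This lets me trade $g_{\mathcal{X}*}$ on the $p$-factor for $g_{\mathcal{X}}^*$ on the $q$-factor inside the cup product on $\mathcal{X}'$, and the factorisation of the trace through $\mathbf{R}f_{U*}$ then yields the identity of pairings $h_X(g_{\mathcal{X}*}-,-) = f_{U*} h_{X'}(-,g_{\mathcal{X}}^*-)$ at the level of derived complexes over $U$.

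To descend to $B$ and $B'$, I would apply $j_{!*}$ and $j'_{!*}$, using the fact that $f$ is proper of relative dimension zero and generically finite, so that $\mathbf{R}f_*$ is perverse-t-exact and commutes with the intermediate extension in the form $\mathbf{R}f_* j'_{!*}\mathcal{G} \simeq j_{!*} \mathbf{R}f_{U*}\mathcal{G}$ for $\mathcal{G} \in \textnormal{Perv}(U')$. This identifies the transport of $\alpha$ under $g_*$ with the class obtained by pushing $\alpha$ forward through $\mathbf{R}f_*$ inside $j_{!*}\mathbf{R}^{2p-1}\pi_*\mathbb{Q}_\ell(p)[b]$, and taking $H^{1-b}(B,-)$ on both sides yields the claimed equality. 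The main obstacle I anticipate is the careful interaction of the duality functor $D$ with $g_{\mathcal{X}*}$ and $f_{U*}$ inside the tensor-hom adjunction defining $h_X$: one must verify that the morphisms corresponding to $h_X$ and $h_{X'}$ under the adjunction fit into a commutative diagram with the projection-formula maps, rather than merely establishing the underlying cup-product identity, and the bookkeeping of Tate twists and shifts through this chain is likely to be the most delicate part of the argument.
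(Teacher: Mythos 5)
Your first step—the projection formula over $U$, where $g_{\mathcal{X}}$ and $f_{U}$ are finite, obtained from the adjunction maps $g_{\mathcal{X}*}g_{\mathcal{X}}^{!}\mathbb{Q}_{\ell}\rightarrow\mathbb{Q}_{\ell}$, $\mathbb{Q}_{\ell}\rightarrow g_{\mathcal{X}*}g_{\mathcal{X}}^{*}\mathbb{Q}_{\ell}$ and the compatibility of traces along $\pi\circ g_{\mathcal{X}}=f_{U}\circ\pi'$—is sound and is also implicit in the paper. The gap is in the descent step. You assert that $\mathbf{R}f_{*}$ is perverse $t$-exact and commutes with intermediate extension, $\mathbf{R}f_{*}j'_{!*}\mathcal{G}\simeq j_{!*}\mathbf{R}f_{U*}\mathcal{G}$, because $f$ is proper, generically finite of relative dimension $0$. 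Neither claim holds for such $f$ (think of a modification of $B$, which is exactly the situation the hypotheses allow): the decomposition theorem only gives
\begin{align*}
f_{*}j^{'}_{!*}\textbf{R}^{2p-1}\pi^{'}_{*}\mathbb{Q}_{\ell}[b]\simeq j_{!*}f_{U*}\textbf{R}^{2p-1}\pi^{'}_{*}\mathbb{Q}_{\ell}[b]\oplus i_{*}C
\end{align*}
with an extra summand supported on $Z=B\setminus U$, and the splitting is non-canonical; this is \eqref{eq:PushDecomp} in the paper. So there is no canonical map between $f_{*}j^{'}_{!*}(-)$ and $j_{!*}f_{U*}(-)$ to which you can simply "apply $j_{!*}$"; one must \emph{choose} pushforward and pullback morphisms, and the choice matters for the statement you want. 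The paper's construction pins the choice down by requiring compatibility with the canonical maps $j_{!}\rightarrow j_{!*}\rightarrow j_{*}$ (diagrams \eqref{eq:CommDig} and \eqref{eq:DualDiag}), identifying both candidate maps with an automorphism $h\in\text{End}(f_{U*}\textbf{R}^{2p-1}\pi^{'}_{*}\mathbb{Q}_{\ell}[b])$ by restriction to $U$.

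The second, related, point is the one you flag at the end as "delicate bookkeeping" but leave unresolved: it is in fact the crux. To get $h_{X}(g_{*}\alpha,\beta)=f_{*}h_{X'}(\alpha,g^{*}\beta)$ one needs to know that the duality functor exchanges the chosen pushforward with the chosen pullback, i.e. that $D$ applied to $f_{*}j^{'}_{!*}\textbf{R}^{2p-1}\pi^{'}_{*}\mathbb{Q}_{\ell}[b]\rightarrow j_{!*}\textbf{R}^{2p-1}\pi_{*}\mathbb{Q}_{\ell}[b]$ is the pullback in the $q$-variable; this is what makes the square relating the adjoint maps of $h_{X'}$ and $h_{X}$ commute, and hence gives $\widehat{g_{*}\alpha}=g^{*}\widehat{\alpha}$ after taking $H^{1-b}_{\textnormal{\'et}}(B,-)$. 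This is precisely what the commutativity of \eqref{eq:DualDiag} (the identity $h=h'$, proved by restricting to $U$) delivers, and it is not a consequence of the $U$-level cup-product identity alone, because the extension of a morphism from $U$ to $B$ through the summand $i_{*}C$ is not unique. Without an argument replacing the false commutation $\mathbf{R}f_{*}j'_{!*}\simeq j_{!*}\mathbf{R}f_{U*}$ and establishing this duality compatibility of the chosen maps, the proposal does not prove the proposition.
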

\cref{th:ProjForm} then follows as a corollary, simply by definition of the height pairing. We have not yet specified how to define $g_{*}\alpha$ and $g^{*}\beta$; we shall define these pushforward and pullback maps in the next section.
\subsection{Constructing the Pushforward and Pullback}
Consider the derived pushforward
\begin{align*}
f_{*}j_{!*}^{'}\textbf{R}^{2p-1}\pi_{*}^{'}\mathbb{Q}_{\ell}(p)[b]
\end{align*}
In what follows we drop the Tate twist for simplicity. Now, let us restrict this complex to $U$:
\begin{align*}
j^{*}f_{*}j_{!*}^{'}\textbf{R}^{2p-1}\pi_{*}^{'}\mathbb{Q}_{\ell}[b]&\simeq f_{U*}j^{'*}j_{!*}^{'}\textbf{R}^{2p-1}\pi_{*}^{'}\mathbb{Q}_{\ell}[b]\\
&\simeq f_{U*}\textbf{R}^{2p-1}\pi_{*}^{'}\mathbb{Q}_{\ell}[b]\\
&\simeq \textbf{R}^{2p-1}\pi_{*}g_{\mathcal{X}*}\mathbb{Q}_{\ell}[b]
\end{align*}
where the last line follows from the fact that $g_{\mathcal{X}*}$ and $f_{U*}$ are exact (since $g_{\mathcal{X}}$ and $f_{U}$ are both finite).
\\

By the decomposition theorem we have a (non-canonical) isomorphism
\begin{align}
\label{eq:PushDecomp}
f_{*}j_{!*}^{'}\textbf{R}^{2p-1}\pi_{*}^{'}\mathbb{Q}_{\ell}[b]\simeq j_{!*}\textbf{R}^{2p-1}\pi_{*}g_{\mathcal{X}*}\mathbb{Q}_{\ell}[b]\oplus i_{*}C 
\end{align}
where $i_{*}C$ is supported on $Z=B\setminus U$.
\\

We want to define pushforward and pullback maps between $f_{*}j_{!*}^{'}\textbf{R}^{2p-1}\pi_{*}^{'}\mathbb{Q}_{\ell}(p)[b]$ and $j_{!*}\textbf{R}^{2p-1}\pi_{*}\mathbb{Q}_{\ell}(p)[b]$ which make the following diagrams commute
\begin{center}
\begin{tikzcd}
  CH_{\text{hom}}^{p}(\mathcal{X}')_{\mathbb{Q}} \arrow[bend left=7.5, "g_{\mathcal{X}*}"]{r} \arrow[d]
  & CH_{\text{hom}}^{p}(\mathcal{X})_{\mathbb{Q}} \arrow[d] \arrow[bend left=7.5, "g_{\mathcal{X}}^{*}"]{l} \\
  H^{2p}_{\text{\'et}}(\mathcal{X}',\mathbb{Q}_{\ell}) \arrow[bend left=7.5]{r} \arrow[d]
  & H^{2p}_{\text{\'et}}(\mathcal{X},\mathbb{Q}_{\ell}) \arrow[d]\arrow[bend left=7.5]{l} \\
  H^{1}_{\text{\'et}}(U',\textbf{R}^{2p-1}\pi_{*}^{'}\mathbb{Q}_{\ell}) \arrow[bend left=7.5]{r}
  & H^{1}_{\text{\'et}}(U,\textbf{R}^{2p-1}\pi_{*}\mathbb{Q}_{\ell}) \arrow[bend left=7.5]{l}\\
  H^{1-b}_{\text{\'et}}(B',j_{!*}^{'}\textbf{R}^{2p-1}\pi_{*}^{'}\mathbb{Q}_{\ell}[b]) \arrow[bend left=7.5, "g_{*}"]{r} \arrow[hookrightarrow]{u}
  & H^{1-b}_{\text{\'et}}(B,j_{!*}\textbf{R}^{2p-1}\pi_{*}\mathbb{Q}_{\ell}[b]) \arrow[hookrightarrow]{u} \arrow[bend left=7.5, "g^{*}"]{l}
\end{tikzcd}
\end{center}
The pushforward and pullback maps on the middle two rows are defined easily using the adjunction maps $g_{\mathcal{X}*}\mathbb{Q}_{\ell}=g_{\mathcal{X}*}g_{\mathcal{X}}^{!}\mathbb{Q}_{\ell}\rightarrow\mathbb{Q}_{\ell}$ and $\mathbb{Q}_{\ell}\rightarrow g_{\mathcal{X}*}g_{\mathcal{X}}^{*}\mathbb{Q}_{\ell}=g_{\mathcal{X}*}\mathbb{Q}_{\ell}$ respectively.
\\

Since the bottom vertical maps are inclusions, this determines the pushforward and pullback maps on the bottom row, however we wish to understand these maps in the derived category. We seek maps that fit into the following commutative diagram
\begin{equation}
\begin{tikzcd}
\label{eq:CommDig}
  f_{*}j_{*}^{'}\textbf{R}^{2p-1}\pi_{*}^{'}\mathbb{Q}_{\ell}[b] \arrow[bend left=7.5, "\sim"]{r}
  & j_{*}f_{U*}\textbf{R}^{2p-1}\pi_{*}^{'}\mathbb{Q}_{\ell}[b] \arrow[bend left=7.5]{r} \arrow[bend left=7.5, "\sim"]{l}
  & j_{*}\textbf{R}^{2p-1}\pi_{*}\mathbb{Q}_{\ell}[b] \arrow[bend left=7.5]{l}  \\
  f_{*}j_{!*}^{'}\textbf{R}^{2p-1}\pi_{*}^{'}\mathbb{Q}_{\ell}[b] \arrow[dashrightarrow, bend left=7.5]{r} \arrow[u]
  & j_{!*}f_{U*}\textbf{R}^{2p-1}\pi_{*}^{'}\mathbb{Q}_{\ell}[b] \arrow[dashrightarrow, bend left=7.5]{l} \arrow[bend left=7.5]{r} \arrow[u]
  & j_{!*}\textbf{R}^{2p-1}\pi_{*}\mathbb{Q}_{\ell}[b] \arrow[bend left=7.5]{l} \arrow[u] 
\end{tikzcd}
\end{equation}
where the vertical maps are induced by $j_{!*}\rightarrow j_{*}$ and $j_{!*}^{'}\rightarrow j_{*}^{'}$, while the rightmost horizontal arrows are again determined by the adjunction maps mentioned above.
\\

To see that these dashed maps exist; applying the isomorphism \eqref{eq:PushDecomp} we have
\begin{align*}
&\text{Hom}(f_{*}j_{!*}^{'}\textbf{R}^{2p-1}\pi_{*}^{'}\mathbb{Q}_{\ell}[b], j_{*}f_{U*}\textbf{R}^{2p-1}\pi_{*}^{'}\mathbb{Q}_{\ell}[b])\\
\simeq{} &\text{Hom}(j_{!*}f_{U*}\textbf{R}^{2p-1}\pi_{*}^{'}\mathbb{Q}_{\ell}[b]\oplus i_{*}C, j_{*}f_{U*}\textbf{R}^{2p-1}\pi_{*}^{'}\mathbb{Q}_{\ell}[b])\\
\simeq{} &\text{Hom}(j_{!*}f_{U*}\textbf{R}^{2p-1}\pi_{*}^{'}\mathbb{Q}_{\ell}[b], j_{*}f_{U*}\textbf{R}^{2p-1}\pi_{*}^{'}\mathbb{Q}_{\ell}[b])\oplus \text{Hom}(i_{*}C, j_{*}f_{U*}\textbf{R}^{2p-1}\pi_{*}^{'}\mathbb{Q}_{\ell}[b])\\
\simeq{} &\text{Hom}(j^{*}j_{!*}f_{U*}\textbf{R}^{2p-1}\pi_{*}^{'}\mathbb{Q}_{\ell}[b],f_{U*}\textbf{R}^{2p-1}\pi_{*}^{'}\mathbb{Q}_{\ell}[b])\oplus \text{Hom}(j^{*}i_{*}C,f_{U*}\textbf{R}^{2p-1}\pi_{*}^{'}\mathbb{Q}_{\ell}[b])\\
\simeq{} &\text{Hom}(f_{U*}\textbf{R}^{2p-1}\pi_{*}^{'}\mathbb{Q}_{\ell}[b],f_{U*}\textbf{R}^{2p-1}\pi_{*}^{'}\mathbb{Q}_{\ell}[b])
\end{align*}
using the adjunction $j^{*}\dashv j_{*}$ and the fact $j^{*}j_{!*}=\text{id}$, $j^{*}i_{*}=0$.
\\

Call $A=f_{U*}\textbf{R}^{2p-1}\pi_{*}^{'}\mathbb{Q}_{\ell}[b]$, then we see that the map $f_{*}j_{!*}^{'}\textbf{R}^{2p-1}\pi_{*}^{'}\mathbb{Q}_{\ell}[b]\rightarrow f_{*}j_{*}^{'}\textbf{R}^{2p-1}\pi_{*}^{'}\mathbb{Q}_{\ell}[b]\xrightarrow{\sim}j_{*}A$ in \eqref{eq:CommDig} is determined by some $h\in \text{End}(A)$, and in fact $h$ is an automorphism since $h$ is obtained by simply restricting to $U$ (on which the above map is an isomorphism).
\\

Obviously the map $j_{!*}A\rightarrow j_{*}A$ is also determined by an element of $\text{End}(A)$, namely $\text{id}_{A}$. So we can define the map
\begin{align*}
f_{*}j_{!*}^{'}\textbf{R}^{2p-1}\pi_{*}^{'}\mathbb{Q}_{\ell}[b]\simeq j_{!*}A\oplus i_{*}C\rightarrow j_{!*}A
\end{align*}
by $(j_{!*}h,0)$ and then the required diagram commutes. Furthermore, the map in the opposite direction
\begin{align*}
j_{!*}A\rightarrow j_{!*}A\oplus i_{*}C\simeq f_{*}j_{!*}^{'}\textbf{R}^{2p-1}\pi_{*}^{'}\mathbb{Q}_{\ell}[b]
\end{align*}
we take to be $(j_{!*}h^{-1},0)$, and again the required diagram in \eqref{eq:CommDig} commutes.
\\

In order to prove the projection formula, we will also want to know that the following diagram commutes
\begin{center}
\begin{equation}
\begin{tikzcd}
\label{eq:DualDiag}
  f_{*}j_{!*}^{'}\textbf{R}^{2p-1}\pi_{*}^{'}\mathbb{Q}_{\ell}[b] \arrow[dashrightarrow, bend left=7.5]{r}
  & j_{!*}f_{U*}\textbf{R}^{2p-1}\pi_{*}^{'}\mathbb{Q}_{\ell}[b] \arrow[dashrightarrow, bend left=7.5]{l}\\
  f_{*}j_{!}^{'}\textbf{R}^{2p-1}\pi_{*}^{'}\mathbb{Q}_{\ell}[b] \arrow[bend left=7.5, "\sim"]{r} \arrow[u]
  & j_{!}f_{U*}\textbf{R}^{2p-1}\pi_{*}^{'}\mathbb{Q}_{\ell}[b] \arrow[bend left=7.5, "\sim"]{l}  \arrow[u] 
\end{tikzcd}
\end{equation}
\end{center}
where the vertical maps are now determined by $j_{!}\rightarrow j_{!*}$, $j_{!}^{'}\rightarrow j_{!*}^{'}$, and the dashed arrows are the ones just constructed.
\\

By an analogous argument, one sees that maps making this diagram commute are determined by $h'\in \text{End}(A)$, so we just need to prove that $h=h'$.
\\

For this, consider the full diagram
 \begin{center}
 \begin{tikzcd}
  f_{*}j_{*}^{'}\textbf{R}^{2p-1}\pi_{*}^{'}\mathbb{Q}_{\ell}[b] \arrow[bend left=7.5, "\sim"]{r}
  & j_{*}f_{U*}\textbf{R}^{2p-1}\pi_{*}^{'}\mathbb{Q}_{\ell}[b] \arrow[bend left=7.5, "\sim"]{l}\\
  f_{*}j_{!*}^{'}\textbf{R}^{2p-1}\pi_{*}^{'}\mathbb{Q}_{\ell}[b] \arrow[dashrightarrow, bend left=7.5]{r} \arrow[u]
  & j_{!*}f_{U*}\textbf{R}^{2p-1}\pi_{*}^{'}\mathbb{Q}_{\ell}[b] \arrow[dashrightarrow, bend left=7.5]{l} \arrow[u]\\
  f_{*}j_{!}^{'}\textbf{R}^{2p-1}\pi_{*}^{'}\mathbb{Q}_{\ell}[b] \arrow[bend left=7.5, "\sim"]{r} \arrow[u]
  & j_{!}f_{U*}\textbf{R}^{2p-1}\pi_{*}^{'}\mathbb{Q}_{\ell}[b] \arrow[bend left=7.5, "\sim"]{l}  \arrow[u] 
\end{tikzcd}
\end{center}
which if we pullback to $U$ we get
\begin{center}
\begin{tikzcd}
  A \arrow[bend left=12.5]{r}
  & A \arrow[bend left=12.5]{l}\\
  A \arrow[dashrightarrow, bend left=12.5]{r} \arrow[u, "\sim"] \arrow[ur, "h"] \arrow[dr, "h'"']
  & A \arrow[dashrightarrow, bend left=12.5]{l} \arrow[u, "\text{id}"']\\
  A \arrow[bend left=12.5]{r} \arrow[u, "\sim"]
  & A \arrow[bend left=12.5]{l}  \arrow[u, "\text{id}"'] 
\end{tikzcd}
\end{center}
and therefore $h=h'$, which concludes the proof that the diagram \eqref{eq:DualDiag} commutes.\\

If we now dualize \eqref{eq:DualDiag}, we get
\begin{center}
\begin{tikzcd}
  f_{*}j_{*}^{'}\textbf{R}^{2q-1}\pi_{*}^{'}\mathbb{Q}_{\ell}[b] \arrow[bend left=7.5, "\sim"]{r}
  & j_{*}f_{U*}\textbf{R}^{2q-1}\pi_{*}^{'}\mathbb{Q}_{\ell}[b] \arrow[bend left=7.5, "\sim"]{l}\\
  f_{*}j_{!*}^{'}\textbf{R}^{2q-1}\pi_{*}^{'}\mathbb{Q}_{\ell}[b] \arrow[dashrightarrow, bend left=7.5]{r} \arrow[u]
  & j_{!*}f_{U*}\textbf{R}^{2q-1}\pi_{*}^{'}\mathbb{Q}_{\ell}[b] \arrow[dashrightarrow, bend left=7.5]{l}  \arrow[u] 
\end{tikzcd}
\end{center}
which tells us that the dual of the pushforward map is the pullback map and vice versa.
\subsection{Proving the Projection Formula}
For any map $L\rightarrow M$ in the derived category we have a commutative diagram
\begin{center}
\begin{tikzcd}
  L \arrow[r] \arrow[d]
  & D(D(L)) \arrow[d]\\
  M \arrow[r]
  & D(D(M))
\end{tikzcd}
\end{center}
where the horizontal maps are the duality morphisms. Applying this to the pushforward map just constructed we obtain
\begin{center}
\begin{tikzcd}
  f_{*}j_{!*}^{'}\textbf{R}^{2p-1}\pi_{*}^{'}\mathbb{Q}_{\ell}[b] \arrow[r] \arrow[d]
  & D(f_{*}j_{!*}^{'}\textbf{R}^{2q-1}\pi_{*}^{'}\mathbb{Q}_{\ell}[b]) \arrow[d]\\
  j_{!*}\textbf{R}^{2p-1}\pi_{*}\mathbb{Q}_{\ell}[b] \arrow[r]
  & D(j_{!*}\textbf{R}^{2q-1}\pi_{*}\mathbb{Q}_{\ell}[b])
\end{tikzcd}
\end{center}
where by the above remarks, $j_{!*}\textbf{R}^{2q-1}\pi_{*}\mathbb{Q}_{\ell}[b]\rightarrow f_{*}j_{!*}^{'}\textbf{R}^{2q-1}\pi_{*}^{'}\mathbb{Q}_{\ell}[b]$ is the pullback map.
\\

Let us apply $H^{1-b}_{\text{\'et}}(B,-)$ to this diagram, we get:
\begin{center}
\begin{tikzcd}
  H^{1-b}_{\text{\'et}}(B',j_{!*}^{'}\textbf{R}^{2p-1}\pi_{*}^{'}\mathbb{Q}_{\ell}[b]) \arrow[r, "\widehat{\cdot}"] \arrow[d, "g_{*}"']
  & \text{Hom}(j_{!*}^{'}\textbf{R}^{2q-1}\pi_{*}^{'}\mathbb{Q}_{\ell}[b],\mathbb{Q}_{\ell}[1+b]) \arrow[d, "g^{*}"]\\
  H^{1-b}_{\text{\'et}}(B,j_{!*}\textbf{R}^{2p-1}\pi_{*}\mathbb{Q}_{\ell}[b]) \arrow[r, "\widehat{\cdot}"']
  & \text{Hom}(j_{!*}\textbf{R}^{2q-1}\pi_{*}\mathbb{Q}_{\ell}[b],\mathbb{Q}_{\ell}[1+b])
\end{tikzcd}
\end{center}
So if we take $\alpha\in H^{1-b}_{\text{\'et}}(B',j_{!*}^{'}\textbf{R}^{2p-1}\pi_{*}^{'}\mathbb{Q}_{\ell}[b])$, we have that
\begin{align}
\label{eq:PushPull}
\widehat{g_{*}\alpha}=g^{*}\widehat{\alpha}
\end{align}
where by definition $g^{*}\widehat{\alpha}$ is the composition
\begin{align*}
j_{!*}\textbf{R}^{2q-1}\pi_{*}\mathbb{Q}_{\ell}[b]\xrightarrow{g^{*}} f_{*}j_{!*}^{'}\textbf{R}^{2q-1}\pi_{*}^{'}\mathbb{Q}_{\ell}[b]\xrightarrow{f_{*}(\widehat{\alpha})} f_{*}\mathbb{Q}_{\ell}[1+b]\xrightarrow{f_{*}} \mathbb{Q}_{\ell}[1+b]
\end{align*}
Let us apply $H^{1-b}_{\text{\'et}}(B,-)$ again, but now to the equality \eqref{eq:PushPull}, we compute that
\begin{align*}
H^{1-b}_{\text{\'et}}(B,-)(\widehat{g_{*}\alpha}):H^{1-b}_{\text{\'et}}(B,j_{!*}\textbf{R}^{2q-1}\pi_{*}\mathbb{Q}_{\ell}[b])\rightarrow H^{2}_{\text{\'et}}(B,\mathbb{Q}_{\ell})
\end{align*}
sends $\beta\mapsto h_{X}(g_{*}\alpha,\beta)$.\\

On the other hand, $H^{1-b}_{\text{\'et}}(B,-)(g^{*}\widehat{\alpha})$ is the composite map
\begin{center}
\begin{tikzcd}
  H^{1-b}_{\text{\'et}}(B,j_{!*}\textbf{R}^{2q-1}\pi_{*}\mathbb{Q}_{\ell}[b]) \arrow[r] \arrow[d, "g^{*}"']
  & H^{2}_{\text{\'et}}(B,\mathbb{Q}_{\ell})\\
  H^{1-b}_{\text{\'et}}(B',j_{!*}^{'}\textbf{R}^{2q-1}\pi_{*}^{'}\mathbb{Q}_{\ell}[b]) \arrow[r]
  & H^{2}_{\text{\'et}}(B',\mathbb{Q}_{\ell}) \arrow[u, "f_{*}"']
\end{tikzcd}
\end{center}
which sends $\beta\mapsto g^{*}\beta\mapsto h_{X'}(\alpha,g^{*}\beta)\mapsto f_{*}h_{X'}(\alpha,g^{*}\beta)$.\\

Putting everything together then, we have that
\begin{align*}
h_{X}(g_{*}\alpha,\beta)=f_{*}h_{X'}(\alpha,g^{*}\beta)
\end{align*}
completing the proof of \cref{prop:ProjForm}.


\begin{thebibliography}{9}

\bibitem{Bei}
A. Beilinson,
\textit{Height pairing between algebraic cycles}, K-Theory, Arithmetic and Geometry, pp. 1-26, (1987).

\bibitem{Bei-Del}
A. Beilinson, J. Bernstein, P. Deligne,
\textit{Faisceaux pervers}, Ast\'erisque, Vol. 100, pp. 5-171, (1982).

\bibitem{Cat}
M.A.A de Cataldo, L. Migliorini,
\textit{The decomposition theorem, perverse sheaves and the topology of algebraic maps}, Bulletin of the American Mathematical Society (2009).

\bibitem{dJo}
A.J. de Jong,
\textit{Smoothness, semi-stability and alterations}, Publications Math\'ematiques de l'IH\'ES, Vol. 83, pp. 51-93, (1996).

\bibitem{Ivo-Mor}
F. Ivorra, S. Morel,
\textit{The four operations on perverse motives}, Journal of the European Mathematical Society, Vol. 126, No. 11, pp. 4191-4272, (2024).

\bibitem{Kah}
B. Kahn,
\textit{Refined Height Pairing}, Algebra and Number Theory, Vol. 18, No. 6, pp. 1039-1079, (2024).

\bibitem{Kie}
R. Kiehl, R. Weissauer,
\textit{Weil Conjectures, Perverse Sheaves and l'adic Fourier Transform}, Germany: Springer, (2001).

\bibitem{Ros-Sam}
D. R\"ossler, T. Szamuely,
\textit{A generalization of Beilinson's geometric height pairing},  (2020).

\end{thebibliography}
\end{document}